\newtheorem{thm}{Theorem}[section]
\newtheorem{lem}[thm]{Lemma}
\newtheorem{prop}[thm]{Proposition}
\theoremstyle{definition}
\newtheorem{defn}[thm]{Definition}
\theoremstyle{remark}
\newtheorem{rem}[thm]{Remark}
\numberwithin{equation}{section} \numberwithin{table}{section}
\newcommand{\GQ}{{\mathrm{Gal}(\overline{\mathbb{Q}}/{\mathbb{Q}})}}
\newcommand{\GQp}{{\mathrm{Gal}(\overline{\mathbb{Q}}_p/{\mathbb{Q}_p})}}
\newcommand{\inj}{\hookrightarrow}
\newcommand{\surj}{\twoheadrightarrow}
\newcommand{\arsub}{\ar@{}[r]|-*[@]{\subset}}
\newcommand{\arsup}{\ar@{}[r]|-*[@]{\supset}}
\newcommand{\arcap}{\ar@{}[d]|-*[@]{\subset}}
\newcommand{\arcup}{\ar@{}[u]|-*[@]{\subset}}
\renewcommand{\pmod}[1]{{~(\mathrm{mod}~{#1})}}
\renewcommand{\mod}[1]{{~\mathrm{mod}~{#1}}}
\newcommand{\F}{{\mathbb{F}}}
\newcommand{\Q}{{\mathbb{Q}}}
\newcommand{\Z}{{\mathbb{Z}}}
\newcommand{\T}{{\mathbb{T}}}
\newcommand{\bP}{{\mathbb{P}}}
\newcommand{\m}{{\mathfrak{m}}}
\newcommand{\fn}{{\mathfrak{n}}}
\newcommand{\cC}{{\mathcal{C}}}
\newcommand{\cE}{{\mathcal{E}}}
\newcommand{\cI}{{\mathcal{I}}}
\newcommand{\cS}{{\mathcal{S}}}
\newcommand{\cT}{{\mathcal{T}}}
\newcommand{\Res}{{\mathrm{Res}}}
\newcommand{\Gal}{{\mathrm{Gal}}}
\newcommand{\Pic}{{\mathrm{Pic}}}
\newcommand{\End}{{\mathrm{End}}}
\newcommand{\GL}{{\mathrm{GL}}}
\newcommand{\SL}{{\mathrm{SL}}}
\newcommand{\Frob}{{\mathrm{Frob}}}
\newcommand{\old}{{\mathrm{old}}}
\newcommand{\new}{{\mathrm{new}}}
\newcommand{\num}{{\mathrm{num}}}
\newcommand{\modl}{{\pmod {\ell}}} 
\mathchardef\hyp="2D
\newcommand{\br}[1]{\langle #1 \rangle}
\newcommand{\ms}{\hspace*{1cm}}
\newcommand{\zmod}[1]{{\Z/{#1}\Z}}
\newcommand{\exclude}[1]{}
\begin{document}                                                                          

\title{On Eisenstein ideals and the cuspidal group of $J_0(N)$}
\author{Hwajong Yoo}
\address{Center for Geometry and Physics, Institute for Basic Science (IBS), Pohang, Republic of Korea 37673}
\email{hwajong@gmail.com}

\subjclass[2010]{11G18, 14G35}
\keywords{Eisenstein ideals, Cuspidal groups}

\begin{abstract}
Let $\cC_N$ be the cuspidal subgroup of the Jacobian $J_0(N)$ for a square-free integer $N>6$. 
For any Eisenstein maximal ideal $\m$ of the Hecke ring of level $N$, we show that $\cC_N[\m]\neq 0$. 
To prove this, we calculate the index of an Eisenstein ideal $\cI$ contained in $\m$ by computing the order of the cuspidal divisor annihilated by $\cI$. 
\end{abstract}
\maketitle
\setcounter{tocdepth}{1}
\tableofcontents

\section{Introduction} 
Let $N$ be a square-free integer greater than $6$ and let $X_0(N)$ denote the modular curve over $\Q$ associated to $\Gamma_0(N)$, the congruence subgroup of $\SL_2(\Z)$ consisting of upper triangular matrices modulo $N$. There is the Hecke ring $\T:=\T(N)$ of level $N$, which is the subring of the endomorphism ring of the Jacobian variety $J_0(N):=\Pic^0(X_0(N))$ of $X_0(N)$ generated by the Hecke operators $T_n$ for all $n\geq 1$. A maximal ideal $\m$ of $\T$ is called \textit{Eisenstein} if 
the two dimensional semisimple representation $\rho_{\m}$ of $\GQ$ over $\T/\m$ attached to $\m$ is reducible, or equivalently 
$\m$ contains the ideal 
$$
\cI_0(N):=(T_r-r-1~:~ \text{for primes } r \nmid N).
$$ 
Let $\cC_N$ be \textit{the cuspidal group} of $J_0(N)$ generated by degree $0$ cuspidal divisors, which is finite by Manin and Drinfeld \cite{Ma72, Dr73}. 

Ribet conjectured that all Eisenstein maximal ideals are ``cuspidal". In other words, $\cC_N[\m] \neq 0$ for any Eisenstein maximal ideal $\m$. There were many evidences of this conjecture. In particular, special cases were already known (cf. \cite[\textsection 3]{Yoo14}). In this paper, we prove his conjecture.

\begin{thm}[Main theorem]\label{thm:main}
Let $\m$ be an Eisenstein maximal ideal of $\T$. Then $\cC_N[\m] \neq 0$.
\end{thm}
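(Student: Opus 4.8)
The plan is to reduce the statement to an explicit computation of the order of a single cuspidal divisor. Given an Eisenstein maximal ideal $\m$ of residue characteristic $\ell$, the reducibility of $\rho_{\m}$ forces, for each prime $p \mid N$, a congruence $U_p \equiv \epsilon_p \pmod{\m}$ with $\epsilon_p \in \{1,p\}$, while $T_r \equiv r+1 \pmod{\m}$ for $r \nmid N$ since $\cI_0(N) \subseteq \m$. I would attach to $\m$ the Eisenstein ideal
\[
\cI \;=\; \bigl(T_r - r - 1 : r \nmid N\bigr) + \bigl(U_p - \epsilon_p : p \mid N\bigr) \;\subseteq\; \m ,
\]
where now $\epsilon_p \in \{1,p\} \subseteq \Z$ is a fixed integer lift. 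The key structural observation is that $\T/\m = \F_{\ell}$ and that $\m$ is the \emph{unique} maximal ideal of $\T$ containing $(\cI,\ell)$: any such ideal is the kernel of the ring homomorphism $\T \to \F_{\ell}$ determined by $T_r \mapsto r+1$, $U_p \mapsto \epsilon_p$, since these operators generate $\T$. Hence $\T/(\cI,\ell)$ is local with maximal ideal $\m$, and it suffices to exhibit a cuspidal divisor killed by $\cI$ whose order in $J_0(N)$ is divisible by $\ell$: such a divisor generates a nonzero finite submodule killed by $(\cI,\ell)$, whose socle is then a nonzero class killed by $\m$, so $\cC_N[\m] \neq 0$.

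Next I would construct the candidate divisor. Since the cusps of $X_0(N)$ are indexed by the divisors $d \mid N$ and the Hecke operators act on the cuspidal divisor group through explicit formulas, the simultaneous eigenvector conditions $T_r C = (r+1)C$ and $U_p C = \epsilon_p C$ single out, up to scaling, a specific degree-zero combination $C = C_{\{\epsilon_p\}}$ of the cusps. I would verify that $\cI$ annihilates $C$ by checking the $U_p$-eigenvalue on each cusp. Because $C$ is an eigenvector for every Hecke operator with integer eigenvalues, $\T C = \Z C$ is cyclic, so its order as an abelian group equals $\mathrm{ord}(C)$; as $\cI$ annihilates $C$, the quotient $\T/\cI \twoheadrightarrow \T C$ gives $\mathrm{ord}(C) \mid [\T:\cI]$, the latter index being finite by the standard finiteness of the Eisenstein quotient.

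The heart of the argument is to compute $\mathrm{ord}(C)$ and match it against the index $[\T:\cI]$. For this I would use the theory of modular units on $X_0(N)$: the divisor $C$ is (a multiple of) the divisor of an explicit product of Siegel units, equivalently an $\eta$-quotient, and its order is read off from the valuations of that unit at the cusps, which are given by closed formulas depending on the primes $p \mid N$ and on the choice $\{\epsilon_p\}$. Carrying this out, I expect to obtain $[\T:\cI] = \mathrm{ord}(C)$ \upto, whence for $\ell \geq 5$ the existence of $\m$ (equivalently $\ell \mid [\T:\cI]$) forces $\ell \mid \mathrm{ord}(C)$, and the reduction of the first paragraph yields $\cC_N[\m] \neq 0$. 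The cases $\ell = 2,3$ would be handled separately, appealing to the special cases already established in the literature (cf. \cite{Yoo14}).

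The main obstacle is the exact order computation in the third step: one must pin down $v_{\ell}(\mathrm{ord}(C))$ on the nose, not merely up to a bounded factor, and show it agrees with $v_{\ell}([\T:\cI])$. This requires a uniform treatment of the valuations of the modular unit at all $2^{\omega(N)}$ cusps and a careful analysis of the interaction between the several primes dividing $N$ and the several admissible eigenvalue patterns $\{\epsilon_p\}$. The delicate bookkeeping at $2$ and $3$ is precisely why those primes are excluded from the clean index formula and are instead treated by hand.
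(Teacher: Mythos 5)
Your first two paragraphs follow the same route as the paper (reduce to an eigenvalue pattern $\epsilon_p \in \{1,p\}$, i.e.\ to the ideal $I_{M,N}$ with $M = \prod_{\epsilon_p=1} p$, and play the cuspidal eigenclass $C = C_{M,N}$ against the index of that ideal), and that reduction is sound. The genuine gap is in your third paragraph, at what you yourself call the heart of the argument. The theory of modular units computes $\mathrm{ord}(C)$ --- this is exactly Ling's method \cite{Lg97}, which the paper runs in \textsection\ref{sec:cuspidalgroup} to prove Theorem \ref{thm:ordercuspidal} --- but it gives no information about $[\T:\cI]$ beyond the divisibility $\mathrm{ord}(C)\mid[\T:\cI]$ that you already extracted from the surjection $\T/\cI \surj \T C$. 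That divisibility points the wrong way: your first paragraph needs the implication $\ell \mid [\T:\cI] \Rightarrow \ell \mid \mathrm{ord}(C)$, i.e.\ an upper bound on the index in terms of the order (in the paper's notation, $\alpha(\ell)\le\beta(\ell)$). No computation of $\mathrm{ord}(C)$, however exact, can produce such a bound, because it is a statement about the size of the Hecke ring, not about the divisor. The paper proves it in \textsection\ref{sec:series}--\ref{sec:index} (Theorems \ref{thm:index1} and \ref{thm:index2}) with an entirely different toolkit: residues of the Eisenstein series $\cE_{M,N}$ at various cusps (Proposition \ref{prop:residue}), the $q$-expansion principle, Katz's theorem on mod $p$ modular forms \cite{Ka76}, and Ohta's level-lowering lemmas \cite{Oh14}; the point is that $\alpha(\ell) > \beta(\ell)$ would force the existence of a mod $\ell$ form of weight $2$ whose $q$-expansion is a nonzero constant, which is impossible. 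Your proposal contains no mechanism of this kind, so the claimed matching of index and order is not something the unit computation will deliver; it is precisely the missing core of the proof.

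Two further points. Deferring $\ell = 2, 3$ to ``special cases already established in the literature'' is not available: those residue characteristics are among the open cases of Ribet's conjecture, and the paper does new work there --- at $\ell = 3$ the index theorems are proved by a delicate mod $9$ argument (Lemma \ref{lem:maximal 3 A=0}), and at $\ell = 2$ the main theorem is proved in \textsection\ref{sec:proof} by a case analysis that needs the exact $2$-part $h$ of Theorem \ref{thm:ordercuspidal} together with Mazur's prime-level results \cite{M77}. Also, your construction of $C$ silently assumes the pattern is not $\epsilon_p \equiv p$ for all $p \mid N$ (i.e.\ $M = 1$): in that case $\sum_{d\mid 1}(-1)^{\omega(d)}P_d = P_1$ has degree one, so there is no degree-zero cuspidal eigenclass and your second step fails; this pattern must be excluded separately, which is the content of the paper's Proposition \ref{prop:nonmaximal}.
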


To prove this theorem, we classify all possible Eisenstein maximal ideals in \textsection \ref{sec:Eisenstein}. From now on, we denote by $U_p$ the $p^{\text{th}}$ Hecke operator $T_p \in \T$ when $p \mid N$.
\begin{prop}
Let $\m$ be an Eisenstein maximal ideal of $\T$. Then, it contains
$$
I_{M, N} := ( U_p-1,~U_q-q,~\cI_0(N)~:~\text{for primes } p \mid M~\text{ and}~ q \mid N/M)
$$
for some divisor $M$ of $N$ such that $M\neq 1$.
\end{prop}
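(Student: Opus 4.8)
The plan is to pin down, for each prime $p\mid N$, the residue class $U_p \bmod \m$ and to show that it is forced to lie in $\{1,p\}$; the divisor $M$ is then read off from this pattern. Write $\ell$ for the residue characteristic of $\m$. Since $\m$ is Eisenstein, $\cI_0(N)=(T_r-r-1 : r\nmid N)$ is contained in $\m$, so $T_r\equiv r+1\pmod{\m}$ for every prime $r\nmid N$, and the semisimple representation $\rho_\m$ is isomorphic to $\mathbf 1\oplus\chi_\ell$, where $\chi_\ell$ is the mod $\ell$ cyclotomic character (its determinant is cyclotomic in weight two, and its trace at $\Frob_r$ is $r+1$). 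In particular $\tr\rho_\m(\Frob_p)\equiv p+1\pmod{\m}$ for every $p\mid N$ at which $\rho_\m$ is unramified.

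First I would establish the key local relation: for each prime $p\mid N$,
$$(U_p-1)(U_p-p)\equiv 0 \pmod{\m},$$
so that $U_p\equiv 1$ or $U_p\equiv p\pmod{\m}$. To see this I would decompose $J_0(N)$ at $p$ into its $p$-old and $p$-new parts, using the two degeneracy maps $J_0(N/p)\rightrightarrows J_0(N)$ and the Atkin--Lehner involution $w_p$. On the $p$-old part the Eichler--Shimura relation gives that $U_p$ is a root of $X^2-T_p^{(N/p)}X+p$; since $T_p^{(N/p)}\equiv \tr\rho_\m(\Frob_p)\equiv p+1\pmod{\m}$, this polynomial reduces to $(X-1)(X-p)$, yielding the claim. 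On the $p$-new part $U_p=-w_p$ with $w_p^2=1$, so $U_p\equiv \pm1\pmod{\m}$; the residual case $U_p\equiv-1$ must then be reconciled with the reducibility of $\rho_\m$ at $p$, which forces $p\equiv -1\pmod{\ell}$ and hence $U_p\equiv -1\equiv p$, again landing in $\{1,p\}$. I expect this local analysis---in particular the $p$-new locus and the subtleties when $p=\ell$---to be the main obstacle, since it is where the Eichler--Shimura and Atkin--Lehner bookkeeping must be carried out carefully and combined with local-global compatibility for $\rho_\m$.

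Granting the relation, I would set $M:=\prod_{p\mid N,\ U_p\equiv 1}\,p$, so that $N/M=\prod_{q\mid N,\ U_q\equiv q}\,q$ (a prime with $U_p\equiv1$ and $U_p\equiv p$, i.e. $p\equiv1\pmod{\ell}$, is assigned to $M$). By construction $U_p-1\in\m$ for $p\mid M$ and $U_q-q\in\m$ for $q\mid N/M$, and $\cI_0(N)\subseteq\m$ already, whence $I_{M,N}\subseteq\m$.

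It remains to prove $M\neq1$, that is, that not every prime dividing $N$ can satisfy $U_p\equiv p\not\equiv1\pmod{\ell}$. Suppose to the contrary that $U_p\equiv p$ and $p\not\equiv1\pmod{\ell}$ for all $p\mid N$. Descending through the $p$-old parts (an induction on the number of prime factors of $N$) reduces to the situation that is $p$-new at every $p$, where $U_p\equiv\pm1$; combined with $U_p\equiv p\not\equiv1$ this forces $U_p\equiv-1\equiv p$ for all $p$. In either case $\m$ would realize the residual Eisenstein eigensystem ``$T_r\equiv r+1$ for $r\nmid N$ and $U_p\equiv p$ for all $p\mid N$'', which is exactly the $M=1$ pattern, i.e. the reduction of the eigensystem of the non-holomorphic weight-two level-one Eisenstein series $E_2$. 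Since no holomorphic weight-two Eisenstein eigenform of level $N$ has all of its $U_p$-eigenvalues equal to $p$ (equivalently, the cuspidal divisor attached to this eigensystem is trivial), this eigensystem cannot occur in $\T$, a contradiction. Hence some $p\mid N$ has $U_p\equiv1\pmod{\m}$, so $M\neq1$, which completes the argument.
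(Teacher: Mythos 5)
The first half of your argument, establishing $(U_p-1)(U_p-p)\in\m$ for each prime $p\mid N$, is sound and is essentially the paper's proof of Lemma \ref{lem:eigenvalue}: on the $p$-old part one uses $U_p^2-T_pU_p+p=0$ together with $T_p\equiv p+1\pmod{\m}$, and on the $p$-new part $U_p\equiv\pm1$ with local-global compatibility forcing $p\equiv-1\pmod{\ell}$ when $U_p\equiv-1$. One loose end there: the local-global step (the unramified quadratic twist description of $\rho_\m$ restricted to $\GQp$) requires $\ell\neq p$ and $\ell$ odd; the paper disposes of $\ell=p$ separately via Ribet's lemma \cite[Lemma 1.1]{R08} (giving $U_\ell\equiv1\pmod{\m}$, via Deligne/Edixhoven), and of $\ell=2$ trivially. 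You flag this case as ``the main obstacle'' but never resolve it.

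The genuine gap, however, is your final step ruling out $M=1$. You argue that since no \emph{holomorphic} weight-two Eisenstein eigenform of level $N$ has $U_p$-eigenvalue $p$ for every $p\mid N$, this eigensystem ``cannot occur in $\T$.'' That inference is a non sequitur: maximal ideals of $\T$ are residual eigensystems of weight-two \emph{cusp} forms of level $N$ (equivalently, of the torsion of $J_0(N)$), and the absence of a characteristic-zero Eisenstein series with matching eigenvalues in no way prevents a cusp form from being congruent mod $\ell$ to the formal eigensystem $T_r\equiv r+1$, $U_p\equiv p$. Indeed $\cE_{1,N}$ exists as a formal $q$-expansion and merely fails to be holomorphic, and precisely for this reason excluding the corresponding maximal ideal is the hardest part of the classification: the paper devotes Proposition \ref{prop:nonmaximal} and the lemma following it to this point. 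There, assuming $(\ell,I_{1,N})$ is maximal, the attached cusp form $\delta$ makes $F_N(q)=(-1/24)\,\cE_{1,N}\bmod\ell$ a mod $\ell$ modular form; the level is then lowered prime by prime using Ohta's $q$-expansion lemma \cite[Lemma (2.1.1)]{Oh14} (this is where the running hypothesis $q\not\equiv1\pmod{\ell}$ is needed, to divide by $p-1$), and finally Mazur's results on mod $\ell$ forms of level one and of prime level \cite[chap.~II]{M77} give the contradiction. Nothing in your proposal substitutes for this. Worse, your parenthetical ``equivalently, the cuspidal divisor attached to this eigensystem is trivial'' makes the argument circular: that an eigensystem invisible in the cuspidal group cannot be maximal is exactly (the contrapositive of) the paper's main theorem, which this proposition is a step toward proving. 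Your proposed descent to the ``everywhere $p$-new'' situation does not help by itself either: the configurations it lands on (e.g.\ a prime level $N'$ with $U_{N'}\equiv N'\equiv-1\pmod{\ell}$, or a product of primes all congruent to $-1$) are precisely the cases requiring the nontrivial input above, not cases that are vacuous.
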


In \textsection \ref{sec:cuspidalgroup}, we study basic properties of the cuspidal group $\cC_N$ of $J_0(N)$. In particular, we explicitly compute the order of the cuspidal divisor $C_{M, N}$, which is the equivalence class of $\sum_{\,d \mid M} (-1)^{\omega(d)} P_d$, where $\omega(d)$ is the number of distinct prime divisors of $d$ and $P_d$ is the cusp of $X_0(N)$ corresponding to $1/d \in \bP^1(\Q)$. 

\begin{thm}
The order of $C_{M, N}$ is equal to the numerator of 
$\frac{\varphi(N)\psi(N/M)}{24} \times h$,
where $h$ is either $1$ or $2$. Moreover, $h=2$ if and only if one of the following holds:
\begin{enumerate}
\item $N=M$ and $M$ is a prime such that $M\equiv 1 \pmod {8}$;
\item $N=2M$ and $M$ is a prime such that $M\equiv 1 \pmod {8}$.
\end{enumerate}
\end{thm}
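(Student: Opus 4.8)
The plan is to compute the order of $C_{M,N}$ by realizing its multiples as divisors of modular units. Since $C_{M,N}$ has degree $0$ and is supported on the cusps, its order in $J_0(N)=\Pic^0(X_0(N))$ is the least $n>0$ for which $n\,C_{M,N}$ is principal; and because $N$ is square-free the relevant modular units are eta quotients $g=\prod_{d\mid N}\eta(d\tau)^{r_d}$, whose divisors I can read off from Ligozat's formula. Using $\gcd(c,N/c)=1$ for $c\mid N$, the order of vanishing of $g$ at the cusp $P_c$ is
$$
\mathrm{ord}_{P_c}(g)=\frac{N}{24\,c}\sum_{d\mid N}\frac{\gcd(c,d)^2}{d}\,r_d .
$$
Writing $v_c=(-1)^{\omega(c)}$ when $c\mid M$ and $v_c=0$ otherwise for the coefficients of $C_{M,N}$, the task is to find $(r_d)$ and the smallest $n$ with $\mathrm{ord}_{P_c}(g)=n\,v_c$ for every $c\mid N$, subject to $g$ being a genuine unit on $X_0(N)$.

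The first step is to invert this system. Indexing cusps and exponents by the divisors $c,d\mid N$ and factoring each over the primes of $N$, the coefficient matrix is $\tfrac{1}{24}$ times the tensor product over $p\mid N$ of the blocks $\bigl(\begin{smallmatrix}p&1\\1&p\end{smallmatrix}\bigr)$, each of determinant $p^2-1$, while the target factors as $\bigotimes_{p\mid M}\bigl(\begin{smallmatrix}1\\-1\end{smallmatrix}\bigr)\otimes\bigotimes_{q\mid N/M}\bigl(\begin{smallmatrix}1\\0\end{smallmatrix}\bigr)$. Solving prime by prime, the local inverse contributes a denominator $p-1$ at each $p\mid M$ and $q^2-1$ at each $q\mid N/M$, so $r_d=\tfrac{24n}{D}\,w_d$ with
$$
D=\prod_{p\mid M}(p-1)\prod_{q\mid N/M}(q^2-1)=\varphi(N)\,\psi(N/M)
$$
and $(w_d)$ the explicit integer vector $\bigotimes_{p\mid M}\bigl(\begin{smallmatrix}1\\-1\end{smallmatrix}\bigr)\otimes\bigotimes_{q\mid N/M}\bigl(\begin{smallmatrix}q\\-1\end{smallmatrix}\bigr)$, whose entries have greatest common divisor $1$. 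Integrality of $(r_d)$ therefore forces $D\mid 24n$, i.e. $n$ is the numerator of $\varphi(N)\psi(N/M)/24$; moreover $\sum_d r_d=0$ automatically, since the local factor at any $p\mid M$ sums to $0$. This already pins down the order up to the factor $h$.

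The factor $h$ comes entirely from the further conditions for $g$ to descend to $X_0(N)$ with trivial character: Ligozat's congruences $\sum_d d\,r_d\equiv\sum_d (N/d)\,r_d\equiv 0\pmod{24}$ and the requirement that $\prod_d d^{r_d}$ be a perfect square. Writing $r_d=e\,w_d$ with $e=24n/D=24/\gcd(D,24)$, the product structure gives $\sum_d(N/d)\,r_d=e\,D$ and $\sum_d d\,r_d\in\{0,\pm e\,D\}$, both divisible by $24$, so the two congruences hold automatically and the square condition is decisive. Now $\prod_d d^{r_d}=(\prod_d d^{w_d})^{e}$ is a square whenever $e$ is even, i.e. whenever $8\nmid D$; and when $8\mid D$ (so $e$ is odd) it is a square if and only if $\prod_d d^{w_d}$ is. Computing, for each prime $\ell\mid N$, the exponent $\sum_{d\mid N,\,\ell\mid d}w_d$, the product structure collapses it to $0$ unless $M$ is a single prime, in which case the only surviving value is $-\varphi(N/M)$. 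Hence $\prod_d d^{w_d}$ fails to be a square exactly when $M$ is prime and $\varphi(N/M)$ is odd, that is when $N=M$ or $N=2M$; and there $8\mid D$ reads $M-1\equiv0\pmod{8}$ and $3(M-1)\equiv0\pmod{8}$ respectively, both equivalent to $M\equiv1\pmod{8}$. Doubling the exponents then produces a unit with divisor $2n\,C_{M,N}$, so the order is $2n$ in the two listed families and $n$ in every other case.

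The main obstacle is the lower bound. To know the order is not smaller than $n$ (resp.\ $2n$), I need that \emph{every} modular unit on $X_0(N)$ is, up to a constant, an eta quotient, so that any principal cuspidal divisor solves the same system; for square-free $N$ this is the structural input I would take from \textsection\ref{sec:cuspidalgroup}, and it is what turns the computation above into a determination of the order rather than a mere upper bound. The only genuinely delicate point is then the $2$-adic bookkeeping isolated above—that the two Ligozat congruences never force a doubling and that the square condition fails in no family beyond the two exhibited—where the square-freeness of $N$, keeping cusps and eta quotients in clean bijection with the divisors of $N$, is used most heavily.
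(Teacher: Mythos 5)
Your proof is correct and is essentially the paper's own argument: both rest on the fact that, for square-free $N$, the modular units of $X_0(N)$ supported on the cusps are eta quotients subject to Ligozat's conditions, and both determine the order of $C_{M,N}$ by inverting the same $2^{\omega(N)}\times 2^{\omega(N)}$ cusp-order matrix and then checking the two mod-$24$ congruences together with the condition that $\prod_d d^{r_d}$ be a square. The only differences are presentational: you invert the matrix via its tensor factorization over the primes dividing $N$, where the paper proves the equivalent box-sum lemmas and appeals to Ling's Proposition 1, and you extract the criterion $M\equiv 1 \pmod{8}$ in the two exceptional families directly from the square condition rather than quoting Ogg and Chua--Ling.
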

(See Notation \ref{sec:notation} for the definition of $\varphi(N)$ and $\psi(N)$.)
This theorem generalizes the works by Ogg \cite{Og73, Og74} and Chua-Ling \cite{CL97} to the case where $\omega(N)\geq 3$. In \textsection \ref{sec:series}, we introduce Eisenstein series and compute their residues at various cusps. With these computations, we can prove the following theorem in \textsection \ref{sec:index}.

\begin{thm}
If $M\neq N$ and $N/M$ is odd, then the index of ${I_{M, N}}$ is equal to the order of $C_{M, N}$. 
Moreover, if $M=N$ or $N/M$ is even, then the index of ${I_{M, N}}$ is equal to the order of $C_{M, N}$ up to powers of $2$. 
\end{thm}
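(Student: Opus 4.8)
The plan is to prove the theorem by sandwiching the index $[\T:I_{M,N}]$ between two divisibility bounds, with the explicit value of the order of $C_{M,N}$ from the previous theorem serving as the common target. The lower bound will come from the fact that $C_{M,N}$ is annihilated by the ideal, and the upper bound from a direct computation of the index through the Eisenstein series $E_{M,N}$ and its residues at the cusps.

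\textbf{The lower bound.} First I would show that $\T/I_{M,N}$ is cyclic. The ring $\T$ is generated over $\Z$ by the $T_r$ with $r\nmid N$ together with the $U_p$ for $p\mid N$, and modulo $I_{M,N}$ every such generator is congruent to a rational integer ($T_r\equiv r+1$, $U_p\equiv1$ for $p\mid M$, and $U_q\equiv q$ for $q\mid N/M$). Hence $\T/I_{M,N}$ is generated by the image of $1$, so it is isomorphic to $\Z/d\Z$ with $d=[\T:I_{M,N}]$. Since the Hecke action on cuspidal divisors studied in \textsection\ref{sec:cuspidalgroup} shows that $I_{M,N}$ annihilates $C_{M,N}$, the integer $d=d\cdot1$ lies in $I_{M,N}$ and therefore kills $C_{M,N}$. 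This yields the divisibility ``order of $C_{M,N}$'' $\mid\,[\T:I_{M,N}]$, valid in every case.

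\textbf{The upper bound.} For the reverse divisibility I would compute the index directly and match it against the order of $C_{M,N}$, using the residue data assembled in \textsection\ref{sec:series}. Two parallel computations drive the comparison. Via the $q$-expansion duality $\T\cong\Hom_\Z(S_2(\Gamma_0(N);\Z),\Z)$, the finite cyclic group $\T/I_{M,N}$ has order equal to the largest modulus of a congruence between $E_{M,N}$ and a genuine cusp form, and this modulus is read off from the constant term of $E_{M,N}$. On the other side, by the theory of modular units, $n\cdot C_{M,N}$ is principal precisely when the weight-$2$ Eisenstein series whose residues at the cusps are $n$ times the multiplicities of $C_{M,N}$ is the logarithmic derivative of a single-valued modular unit, and the minimal such $n$ --- the order of $C_{M,N}$ --- is governed by the same constant-term and residue computation. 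Identifying the two computations gives $[\T:I_{M,N}]\mid$ (order of $C_{M,N}$) away from the prime $2$, which together with the lower bound forces equality on odd parts.

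\textbf{The prime $2$ and the main difficulty.} The source of the restriction, and the step I expect to be hardest, is the $2$-adic comparison. The constant terms of weight-$2$ Eisenstein series carry the denominator $24=2^3\cdot3$, and the order formula of the previous theorem contains the factor $h\in\{1,2\}$; the $3$-part always matches, but the $2$-part is delicate. When $M\neq N$ and $N/M$ is odd, any $2\mid N$ forces $2\mid M$, so $U_2\equiv1$, and the $2$-adic congruences and cusp residues align, giving equality of index and order on the nose. When instead $M=N$ or $N/M$ is even, one has $U_2\equiv2$ (or no relation $U_q\equiv q$ at all), which perturbs the $2$-adic residues and allows the cusps to collapse $2$-adically, so I can match index and order only after inverting $2$. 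Pinning down or bounding the precise $2$-power discrepancy requires a separate local analysis at $2$, and this is the technical heart of the argument.
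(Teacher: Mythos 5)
Your lower bound is correct and is exactly the paper's starting point: $\T/I_{M,N}$ is cyclic, $I_{M,N}$ kills $C_{M,N}$, so the order of $C_{M,N}$ divides the index. The gap is the upper bound, which you assert rather than prove. You claim the index is ``the largest modulus of a congruence between $E_{M,N}$ and a genuine cusp form,'' that this modulus is ``read off from the constant term,'' and that this computation can be ``identified'' with the modular-unit computation of the order of $C_{M,N}$. But that identification is the entire content of the theorem, and it is not a formal matching of two formulas. Concretely: a congruence of $q$-expansions at $\infty$ between $\cE_{M,N}$ and a cusp form $\delta$ modulo $\ell^{\alpha}$ only forces the two forms to agree on the irreducible component of $X_0(N)_{\F_{\ell}}$ containing $P_N$ (the $q$-expansion principle). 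When $M\neq N$ the residue of $\cE_{M,N}$ at $P_N$ is $0$ (Proposition \ref{prop:residue}); the nonzero residue $\varphi(N)\psi(N/M)(M/N)$ sits at the cusp $P_M$, and one can conclude it vanishes modulo $\ell^{\alpha}$ only if $P_M$ lies on the same component, which happens precisely when $\ell \nmid N/M$. When $\ell \mid N/M$ this fails, and the paper instead runs a level-lowering argument (using $\T(N)^{\ell\hyp\old}=R[U_{\ell}]$, Katz's theorem on mod $\ell$ forms whose $q$-expansions involve only powers of $q^{\ell}$, and Ohta's lemma) to descend to level $N/\ell$. None of this machinery, nor any substitute for it, appears in your sketch.

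Your division of difficulty is also off. You say ``the $3$-part always matches'' as if it were automatic, but $\ell=3$ is one of the genuinely hard cases: the non-existence of a mod $\ell$ weight-$2$ form with non-zero constant $q$-expansion, which drives the case $M=N$, is delicate at $\ell=3$, and the paper needs mod $9$ arguments, residue comparisons at the cusps $P_{N/p}$, and a separate lemma (Lemma \ref{lem:maximal 3 A=0}: if $(3,\,I_{N,N})$ is maximal then $3\mid\varphi(N)$) to handle it. Likewise, when $\ell\mid N$ the argument changes entirely (one lifts $\delta$ to a regular differential on $X_0(N)_{\Z_{(\ell)}}$ and splits into $\ell$-old and $\ell$-new cases via Ohta's results); your proposal never addresses primes dividing the level. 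Finally, the ``separate local analysis at $2$'' you defer to is not actually demanded by the statement --- the theorem concedes powers of $2$ precisely in the cases where the constant-term method breaks down --- so the real technical heart is the odd-prime analysis you skipped, not the prime $2$.
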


Finally, combining all the results above, we prove our main theorem in \textsection \ref{sec:proof}.

\subsection*{Acknowledgements}
We are grateful to Ken Ribet for suggesting the problem and his advice during the preparation of this work. We thank the anonymous referee for careful reading and a number of suggestions and corrections to improve the paper.

\subsection{Notation}\label{sec:notation}


For a square-free integer $N=\prod_{i=1}^n p_i $, we define the following quantities:
$$
\omega(N) := n = \text{ the number of distinct prime divisors of } N;
$$
$$
\varphi(N) :=\prod_{i=1}^n (p_i-1) \quad \text{and} \quad \psi(N) :=\prod_{i=1}^n (p_i+1).
$$

For any rational number $x=a/b$, we denote by $\num(x)$ the numerator of $x$, i.e.,
$$
\num(x) := \frac{a}{(a, b)}.
$$

For a prime divisor $p$ of $N$, there is the degeneracy map $\gamma_p: J_0(N/p) \times J_0(N/p) \rightarrow J_0(N)$ (cf. \cite[\textsection 3]{R84}). The image of $\gamma_p$ is called the \textit{$p$-old subvariety} of $J_0(N)$ and is denoted by $J_0(N)_{p\hyp\old}$. The quotient of $J_0(N)$ by $J_0(N)_{p\hyp\old}$ is called the \textit{$p$-new quotient} and is denoted by $J_0(N)^{p\hyp\new}$.
Note that $J_0(N)_{p\hyp\old}$ is stable under the action of Hecke operators and $\gamma_p$ is Hecke-equivariant. Accordingly, the image of $\T(N)$ in $\End(J_0(N)_{p\hyp\old})$ 
(resp. $\End(J_0(N)^{p\hyp\new})$) is called the \textit{$p$-old} (resp. \textit{$p$-new}) \textit{quotient} of $\T(N)$
and is denoted by $\T(N)^{p\hyp\old}$ (resp. $\T(N)^{p\hyp\new}$).  
A maximal ideal $\m$ of $\T(N)$ is called \textit{$p$-old} (resp. \textit{$p$-new}) if its image in  $\T(N)^{p\hyp\old}$ (resp. $\T(N)^{p\hyp\new}$) is still maximal.
Note that if a maximal ideal $\m$ of $\T(N)$ is $p$-old, then there is a maximal ideal $\fn$ of $\T(N/p)$ corresponding to $\m$ (cf. \cite[\textsection 7]{R90}). 

For a prime divisor $p$ of $N$, we denote by $w_p$ the Atkin-Lehner operator (with respect to $p$) acting on $J_0(N)$ (and the space of modular forms of level $N$). (For more detail, see \cite[\textsection 1]{Oh14}.) 

For a prime $p$, we denote by $\Frob_p$ an arithmetic Frobenius element for $p$ in $\GQ$.

\section{Eisenstein ideals}\label{sec:Eisenstein}
From now on, we denote by $N$ a square-free integer greater than $6$ and let $\T:=\T(N)$ be the Hecke ring of level $N$.
A maximal ideal $\m$ of $\T$ is called \textit{Eisenstein} if 
the two dimensional semisimple representation $\rho_{\m}$ of $\GQ$ over $\T/\m$ attached to $\m$ is reducible, or equivalently 
$\m$ contains the ideal $\cI_0(N):=(T_r-r-1~:~ \text{for primes } r \nmid N)$. 
(For the existence of $\rho_{\m}$, see \cite[Proposition 5.1]{R90}.)

Let us remark briefly why these two definitions are equivalent. Let $\m$ be a maximal ideal of $\T$ containing $\ell$. If $\rho_{\m}$ is reducible, then $\rho_{\m} \simeq \mathbbm{1} \oplus \chi_{\ell}$, where $\mathbbm{1}$ is the trivial character and $\chi_{\ell}$ is the mod $\ell$ cyclotomic character, by Ribet \cite[Proposition 2.1]{Yoo14a}. Therefore for a prime $r$ not dividing $\ell N$, we have
$$
T_r \pmod {\m} = \text{trace}(\rho_{\m}(\Frob_r)) = 1 + r 
$$
and hence $T_r-r-1 \in \m$. For $r=\ell$, we get $T_{\ell} \equiv 1+\ell \equiv 1 \pmod {\m}$ by Ribet \cite[Lemma 1.1]{R08}. (This lemma basically follows from the result by Deligne \cite[Theorem 2.5]{Ed92} and this is also true even when $\ell$ divides $N$.) Conversely, if $\m$ contains $\cI_0(N)$, then $\rho_{\m} \simeq \mathbbm{1} \oplus \chi_{\ell}$ by the Chebotarev and the Brauer-Nesbitt theorems.

To classify all Eisenstein maximal ideals, we need to understand the image of $U_p$ in the residue fields for any prime divisor $p$ of $N$.

\begin{lem}\label{lem:eigenvalue}
Let $\m$ be an Eisenstein maximal ideal of $\T$. Let $p$ be a prime divisor of $N$ and 
$U_p-\epsilon(p) \in \m$. Then, $\epsilon(p)$ is either $1$ or $p$ modulo $\m$.
\end{lem}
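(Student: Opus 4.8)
The plan is to pin down the image $\epsilon(p)$ of $U_p$ in the residue field $\T/\m$ by separating two cases, according to whether $\m$ is $p$-old or $p$-new, and to exploit throughout that $\rho_{\m} \simeq \mathbbm{1} \oplus \chi_{\ell}$ is unramified at $p$. First I would dispose of the case $p = \ell$: by the result of Ribet quoted above (which holds even when $\ell \mid N$) one has $U_{\ell} \equiv 1 \pmod{\m}$, so $\epsilon(\ell) = 1$. Hence I may assume $p \neq \ell$, so that $\chi_{\ell}$, and therefore $\rho_{\m}$, is unramified at $p$, with $\Frob_p$ acting on $\rho_{\m}$ with the two eigenvalues $1$ and $\chi_{\ell}(\Frob_p) = p \bmod \ell$. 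The entire point will be to show that $\epsilon(p) \bmod \m$ is one of these two Frobenius eigenvalues.

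Suppose first that $\m$ is $p$-old. By the remark recalled in the Notation, $\m$ then corresponds to a maximal ideal $\fn$ of $\T(N/p)$. Since $\m \supseteq \cI_0(N)$, the ideal $\fn$ contains $T_r - r - 1$ for every prime $r \nmid N$; as these $r$ have density $1$, the Chebotarev density theorem together with the Brauer--Nesbitt theorem forces $\rho_{\fn}$ to be reducible, i.e. $\fn$ is itself Eisenstein. Because $N$ is square-free we have $p \nmid N/p$, so $p$ is one of the ``good'' primes for level $N/p$ and the level-$(N/p)$ operator $T_p$ satisfies $T_p \equiv p + 1 \pmod{\fn}$. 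On the $p$-old part of $J_0(N)$ the operator $U_p$ satisfies the quadratic relation $U_p^2 - T_p\, U_p + p = 0$, where $T_p$ acts through the degeneracy maps; reducing modulo $\m$ this becomes
$$
U_p^2 - (p+1)U_p + p = (U_p - 1)(U_p - p) \equiv 0 \pmod{\m}.
$$
Therefore $\epsilon(p) \equiv 1$ or $p \pmod{\m}$, as claimed.

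Now suppose $\m$ is $p$-new. Since $N$ is square-free and we are in weight $2$ with trivial character, the relation $U_p^2 = 1$ holds on the $p$-new quotient of $\T$, so $U_p^2 - 1 \in \m$ and $\epsilon(p) \equiv \pm 1 \pmod{\m}$. Reconciling the value $-1$ with the target set $\{1, p\}$ is the main obstacle, and I would resolve it by local--global compatibility at the ramified prime $p$. Choosing a $p$-newform $f$ whose system of eigenvalues reduces to that of $\m$, the restriction of $\rho_f$ to a decomposition group $D_p$ at $p$ is special (Steinberg), so after semisimplification it is $\lambda \oplus \lambda\chi$, where $\chi$ is the $\ell$-adic cyclotomic character and $\lambda$ is the unramified character with $\lambda(\Frob_p)$ equal to the $U_p$-eigenvalue of $f$. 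Reducing modulo $\m$ and matching with $\rho_{\m}|_{D_p} \simeq \mathbbm{1} \oplus \chi_{\ell}$ gives an equality of character sets $\{\,\overline{\lambda},\ \overline{\lambda}\,\chi_{\ell}\,\} = \{\,\mathbbm{1},\ \chi_{\ell}\,\}$ on $D_p$; in either case $\overline{\lambda}(\Frob_p) \in \{\,1,\ p \bmod \ell\,\}$. Thus $\epsilon(p) \equiv \overline{\lambda}(\Frob_p) \in \{1, p\} \pmod{\m}$, which in particular shows that the value $-1$ can occur only when $p \equiv -1 \pmod{\ell}$. This settles both cases and hence the lemma.
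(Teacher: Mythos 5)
Your proposal is correct and follows essentially the same route as the paper: in the $p$-old case you use the quadratic relation $U_p^2 - T_pU_p + p = 0$ together with $T_p \equiv p+1 \pmod{\fn}$ for the corresponding Eisenstein ideal $\fn$ of $\T(N/p)$, and in the $p$-new case you use $\epsilon(p)=\pm 1$ plus local--global compatibility at $p$ (the Steinberg description of $\rho_{\m}|_{\GQp}$, which is exactly the paper's citation of \cite[Theorem 3.1.(e)]{DDT}) to force $\epsilon(p)\in\{1,p\}$ modulo $\m$. Your only departures are organizational --- disposing of $p=\ell$ up front via Ribet's lemma and letting the character-matching argument absorb the $\ell=2$ case, which the paper instead treats as separate trivial subcases.
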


\begin{proof}
Assume that $\m$ is $p$-old. Then $\m$ can be regarded as a maximal ideal of $\T^{p\hyp\old}$.  
Let $R$ be the common subring of the Hecke ring $\T(N/p)$ of level $N/p$ and $\T^{p\hyp\old}$, which is generated by all $T_n$ with $p \nmid n$. 
Let $\fn$ be the corresponding maximal ideal of $\T(N/p)$ to $\m$ and $T_p$ be the $p^{\text{th}}$ Hecke operator in $\T(N/p)$. 
Then, we get
$$
\T(N/p) = R [T_p] \quad\text{and}\quad \T(N)^{p\hyp\old} = R[U_p]
$$
\cite[\textsection 7]{R90} and $\T/\m\simeq \T(N/p)/\fn$.
Two operators $T_p$ and $U_p$ are connected by the quadratic equation
$U_p^2-T_p U_p+p=0$ (\textit{loc. cit.}). Note that $T_p-p-1 \in \fn$ because $\fn$ is Eisenstein as well.
Therefore over the ring $\T/\m \simeq \T(N/p)/\fn$, we get
$U_p^2-(p+1)U_p+p = (U_p-1)(U_p-p) = 0$ and hence either $\epsilon(p) \equiv 1$ or $p \pmod {\m}$.

Assume that $\m$ is $p$-new. Then $\epsilon(p) = \pm 1$. Therefore it suffices to show that $\epsilon(p) \equiv 1$ or $p \pmod {\m}$ when $\epsilon(p)=-1$. 
Let $\ell$ be the residue characteristic of $\m$. 
If $\ell=2$, then there is nothing to prove because $1 \equiv -1 \pmod {\m}$. 
If $\ell=p$, then $U_p \equiv 1 \pmod {\m}$ by Ribet \cite[Lemma 1.1]{R08}. Therefore we assume that $\ell\geq 3$ and $\ell \neq p$.
On the one hand, we have $\rho_{\m} \simeq \mathbbm{1} \oplus \chi_{\ell}$. On the other hand, the semisimplification of the restriction of $\rho_{\m}$ to $\GQp$ is isomorphic to $\epsilon \oplus \epsilon\chi_{\ell}$, where $\epsilon$ is the unramified quadratic character with $\epsilon(\Frob_p)=\epsilon(p)$ because $\m$ is $p$-new (cf. \cite[Theorem 3.1.(e)]{DDT}). Since $\epsilon(p)=-1$, we get $p \equiv -1 \modl$ and hence $\epsilon(p) \equiv p \pmod {\m}$.
\end{proof}

Let $\m$ be an Eisenstein maximal ideal of $\T$ containing $\ell$. Then, it contains 
$$
I_{M, N} := (U_p-1,~U_q-q,~\cI_0(N)~:~\text{for primes } p \mid M ~~\text{ and } ~~q \mid N/M) \subseteq \T
$$
for some divisor $M$ of $N$ by the previous lemma. If $q \equiv 1 \modl$ for a prime divisor $q$ of $N/M$, then $\m=(\ell, ~I_{M, N})=(\ell,~I_{M\times q, N})$. Therefore when we denote by $\m:=(\ell, ~I_{M, N})$ for some divisor $M$ of $N$, we always assume that $q \not\equiv 1 \modl$ for all prime divisors $q$ of $N/M$. Hence if $\ell=2$, then either $\m:=(\ell,~I_{N, N})$ or $\m:=(\ell,~I_{N/2, N})$. If $\ell\geq 3$, $\m:=(\ell,~I_{1, N})$ cannot be maximal by Proposition \ref{prop:nonmaximal}. Therefore from now on, we always assume that $M\neq 1$.

\section{The cuspidal group} \label{sec:cuspidalgroup}
As before, let $N$ denote a square-free integer and let $M\neq 1$ denote a divisor of $N$.
For a divisor $d$ of $N$, we denote by $P_d$ the cusp corresponding to $1/d$ in $\bP^1(\Q)$. (Thus, the cusp $\infty$ is denoted by $P_N$.) 
We denote by $C_{M, N}$ the equivalence class of a cuspidal divisor $\sum_{\,d\mid M} (-1)^{\omega(d)} P_d$. Note that $I_{M, N}$ annihilates $C_{M, N}$ \cite[Proposition 2.13]{Yoo14}. To compute the order of $C_{M, N}$, we use the method of Ling \cite[\textsection 2]{Lg97}.  

\begin{thm}\label{thm:ordercuspidal}
The order of $C_{M, N}$ is equal to
$$
 \num\left(\frac{\varphi(N)\psi(N/M)}{24} \right)\times h,
$$
where $h$ is either $1$ or $2$. Moreover, $h=2$ if and only if one of the following holds:
\begin{enumerate}
\item $N=M$ and $M$ is a prime such that $M\equiv 1 \pmod {8}$;
\item $N=2M$ and $M$ is a prime such that $M\equiv 1 \pmod {8}$.
\end{enumerate}
\end{thm}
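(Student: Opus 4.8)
The plan is to follow Ling's method: realize the cuspidal divisor class as the divisor of a modular unit and read off its order. Write $D_{M,N}:=\sum_{d\mid M}(-1)^{\omega(d)}P_d$, so that $C_{M,N}$ is the class of $D_{M,N}$ in $\cC_N$. Since $\cC_N$ is the quotient of the degree-zero cuspidal divisors by the divisors of modular units on $X_0(N)$, the order of $C_{M,N}$ is the least positive integer $n$ such that $nD_{M,N}$ is the divisor of a modular unit. By the theory of modular units for square-free level (Ligozat), every such unit is, up to a scalar, an eta quotient $g=\prod_{e\mid N}\eta(e\tau)^{r_e}$, and $g$ descends to a function on $X_0(N)$ precisely when $\sum_e r_e=0$, when $\sum_e e\,r_e\equiv\sum_e (N/e)r_e\equiv 0\pmod{24}$, and when $\prod_e e^{r_e}$ is a perfect square in $\Q^\times$. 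First I would record Ligozat's valuation formula, which for square-free $N$ gives $\mathrm{ord}_{P_d}(g)=\tfrac{1}{24}\sum_{e\mid N}\tfrac{N\gcd(d,e)^2}{d\,e}\,r_e$, and impose $\mathrm{div}(g)=nD_{M,N}$.

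The structural heart of the argument is multiplicativity over the primes dividing $N$. The cusps $P_d$ are indexed by $d\mid N$, i.e.\ by subsets of the primes, and the valuation pairing factors as a tensor product over $p\mid N$ of the local matrices $\mat{p}{1}{1}{p}$, each of determinant $p^2-1$; the divisor $D_{M,N}$ likewise factors as a tensor of a ``difference'' factor at each $p\mid M$ and a fixed factor at each $q\mid N/M$. Since the valuation map is injective modulo constants, the exponent vector $r$ is \emph{forced} by $n$: inverting the local matrices gives $r=\tfrac{24n}{W}\,T_0$, where $W:=\varphi(N)\psi(N/M)=\prod_{p\mid M}(p-1)\prod_{q\mid N/M}(q^2-1)$ and $T_0$ is an explicit integer tensor of content $1$. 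Hence $r$ is integral if and only if $n$ is a multiple of $n_0:=\num(W/24)$, and this integrality condition already produces the main term $\num(\varphi(N)\psi(N/M)/24)$.

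It then remains to see which multiple of $n_0$ first satisfies the two congruence conditions and the square condition; this is where the factor $h$ is born, and the square condition is the point I expect to be the main obstacle. A direct computation of $A:=\sum_e e\,(T_0)_e$ and $B:=\sum_e (N/e)(T_0)_e$ shows, again by the product structure, that $A\in\{0,\pm\varphi(M)\}$ and $B=\pm W$, so that $\tfrac{24n}{W}A$ and $\tfrac{24n}{W}B$ are automatically divisible by $24$; thus the congruence conditions impose nothing beyond integrality. Writing $\prod_e e^{r_e}=\prod_{\ell\mid N}\ell^{v_\ell}$, the product structure forces $v_\ell=0$ unless $\ell\mid M$ and $M$ is prime, in which case $v_M=-\tfrac{24n}{W}\varphi(N/M)$. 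Thus the square condition can fail only when $M$ is prime and $\varphi(N/M)$ is odd, i.e.\ $N/M\in\{1,2\}$; a short $2$-adic analysis of the parity of $\tfrac{24n_0}{W}$ then shows that doubling $n_0$ is needed exactly when $8\mid W$, that is, exactly when $M\equiv 1\pmod 8$. This reproduces the two listed cases, giving $h=2$ there and $h=1$ otherwise, and I would finally sanity-check the normalization against Ogg's prime-level formula.
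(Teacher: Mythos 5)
Your proposal is correct and is essentially the paper's own argument: both follow Ling's method, reducing the order of $C_{M,N}$ to Ligozat's criteria for eta quotients, extracting the main term $\num\bigl(\varphi(N)\psi(N/M)/24\bigr)$ from integrality of the forced exponent vector (together with the two congruences, which indeed add nothing), and producing the factor $h\in\{1,2\}$ from the square condition $\prod_e e^{r_e}\in(\Q^\times)^2$, which is active exactly when $M$ is prime and $N/M\in\{1,2\}$. The only differences are organizational: you invert the valuation pairing via its tensor-product factorization into local matrices $\mat{p}{1}{1}{p}$, where the paper performs the same inversion by hand through its box-sum and sign combinatorics, and you settle the mod $8$ criterion by a direct $2$-adic computation where the paper invokes Ogg and Chua--Ling for the two residual cases.
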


\begin{rem}
The size of the set $\cC_N$ is computed by Takagi \cite{Ta97}. Recently, Harder discussed the more general question of giving denominators of Eisenstein cohomology classes. The order of a cuspidal divisor is a special case of such a denominator and some cases were computed by a slightly different method from the one used here \cite[\textsection 2]{Ha15}.
\end{rem}

Before starting to prove this theorem, we define some notations and provide lemmas.

Let $N=\prod_{i=1}^n p_i$. We denote by $\cS$ the set of divisors of $N$. Let $s:=2^n=\# \cS$. 
\begin{enumerate}
\item For $a\in \cS$, we denote by 
$$
a=(a_1, \,a_2, \cdots, \,a_n),
$$
where $a_i=0$ if $(p_i, \,a)=1$; and $a_i=1$ otherwise. For instance, $1=(0, \,0, \cdots, \,0)$ and
$N=(1, \,1, \cdots, \,1)$. 

\item We define the total ordering on $\cS$ as follows.

Let $a, b \in \cS$ and $a\neq b$. 
\begin{itemize}
\item If $\omega(a)<\omega(b)$, then $a<b$. In particular, $1<a<N$ for $a \in \cS\setminus \{1, N\}$.
\item If $\omega(a)=\omega(b)$, then we use the anti-lexicographic order. In other words, $a<b$ if $a_i=b_i$ for all $i<t$ and $a_t > b_t$.
\end{itemize}

\item We define the box addition $\boxplus$ on $\cS$ as follows.
$$
a \boxplus b:=(c_1, \,c_2, \cdots, \,c_n),
$$
where $c_i \equiv a_i + b_i +1 \pmod {2}$ and $c_i \in \{0, 1\}$. For instance, $p_1\boxplus p_1=N$ and $1 \boxplus a=N/a$.

\item Finally, we define the sign on $\cS$ as follows. 
$$
\mathrm{sgn}(a):=(-1)^{s(a)},
$$
where $s(a)=\omega(N)-\omega(a)$.
For example, $\mathrm{sgn}(N)=1$ and $\mathrm{sgn}(1)=(-1)^n$.

\end{enumerate}

We denote by $\cS = \{d_1, ~d_2,~\dots,~d_s \}$, where $d_i < d_j$ if $i <j$. For instance, $d_1=1$, $d_2=p_1$ and $d_s=N$. Note that $d_i \times d_{s+1-i}=N$ for any $i$. 

For ease of notation, we denote by $d_{ij}$ the box sum $d_i \boxplus d_j$. 
\begin{lem}\label{lem:boxproperties}
We have the following properties of  $~\boxplus$.
\begin{enumerate}
\item $d_{ij} = d_{ji} = d_{s+1-i} \boxplus d_{s+1-j}$.
\item $d_{i1}=N/{d_i}=d_{s+1-i}$.
\item\label{boxproperty3} $\cS = \{d \boxplus d_1, ~d \boxplus d_2, ~\dots,~d \boxplus d_s \}$ for any $d=d_i$.
\item $\mathrm{sgn}(d_{ij})=\mathrm{sgn}(d_i) \times \mathrm{sgn}(d_j)$.
\item\label{boxproperty5} Assume that $i \neq j$ and $d_{ij}$ is not divisible by $p_n$. Then, for any $d_k$ such that $d_{kj}$ is not divisible by $p_n$, we get
$$
d_{i k} \times d_{k j} = d_{i r(k)} \times d_{r(k) j},
$$
where $r(k)$ is the unique integer between $1$ and $s$ such that $d_{r(k) j}=p_n \cdot d_{kj}$. 
\end{enumerate}
\end{lem}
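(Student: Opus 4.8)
The plan is to reduce everything to the elementary $2$-group structure underlying $\boxplus$. The key observation is that, writing each divisor $a\in\cS$ as its binary vector and letting $\oplus$ denote coordinatewise addition modulo $2$ (so that $(\cS,\oplus)$ is a copy of $(\Z/2\Z)^n$ with identity $1$), the definition of $\boxplus$ becomes the clean identity
$$
a \boxplus b = a \oplus b \oplus N,
$$
since $(a\boxplus b)_m=1$ exactly when $a_m=b_m$, which is the negation of $(a\oplus b)_m$, i.e. $(a\oplus b)_m\oplus 1=(a\oplus b\oplus N)_m$. In the same language $N/a=a\oplus N$ is the complement, so the relation $d_i\cdot d_{s+1-i}=N$ gives at once $d_{s+1-i}=d_i\oplus N$.

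With this dictionary, parts (1)--(4) are formal. Commutativity $d_{ij}=d_{ji}$ is clear from the symmetry of the defining congruence, and $d_{s+1-i}\boxplus d_{s+1-j}=(d_i\oplus N)\oplus(d_j\oplus N)\oplus N=d_i\oplus d_j\oplus N=d_{ij}$ gives the rest of (1); part (2) is $d_i\boxplus 1=d_i\oplus N=N/d_i=d_{s+1-i}$; and part (3) is the statement that translation by $d$ in $(\cS,\boxplus)$ is a bijection, immediate since $x\mapsto d\oplus x\oplus N$ is invertible. For (4) I would introduce the homomorphism $\epsilon\colon(\cS,\oplus)\to\{\pm1\}$, $\epsilon(a)=(-1)^{\omega(a)}$ (it is a homomorphism because $\omega(a\oplus b)\equiv\omega(a)+\omega(b)\pmod 2$), and note $\mathrm{sgn}(a)=(-1)^n\epsilon(a)$. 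Then $\mathrm{sgn}(d_{ij})=(-1)^n\epsilon(d_i\oplus d_j\oplus N)=(-1)^n\epsilon(d_i)\epsilon(d_j)(-1)^n=\epsilon(d_i)\epsilon(d_j)=\mathrm{sgn}(d_i)\mathrm{sgn}(d_j)$.

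The only substantial part, and the one I expect to be the main obstacle, is (5), because it concerns the \emph{ordinary integer products} $d_{ik}\times d_{kj}$ rather than $\boxplus$. Here I would argue coordinatewise on prime exponents: the integer $d_{ik}\times d_{kj}$ has $p_m$-exponent $(d_{ik})_m+(d_{kj})_m$, and similarly on the right. Since $r(k)$ is characterised by $d_{r(k)j}=p_n\cdot d_{kj}$, unwinding through $a\boxplus b=a\oplus b\oplus N$ shows $d_{r(k)}=d_k\oplus p_n$, i.e. $d_{r(k)}$ is obtained from $d_k$ by flipping only its $n$-th bit. Consequently, for every $m\neq n$ both factors are unchanged, so the two products carry identical $p_m$-exponents, and it remains only to match the $p_n$-exponent.

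For the $n$-th coordinate I would set $x=(d_i)_n$, $y=(d_k)_n$, $z=(d_j)_n$. The hypothesis that $d_{ij}$ is not divisible by $p_n$ forces $x\neq z$, and the hypothesis that $d_{kj}$ is not divisible by $p_n$ forces $y\neq z$; in $\{0,1\}$ these together give $x=y$. Using $(d_{r(k)})_n=1-y$ and $z=1-y$, a direct check shows both products have $p_n$-exponent $1$: on the left $(d_{ik})_n+(d_{kj})_n=1+0$, and on the right $(d_{ir(k)})_n+(d_{r(k)j})_n=0+1$. Combining the two ranges of coordinates, the integer products agree prime by prime, proving (5). The delicate point is precisely that both non-divisibility hypotheses are needed \emph{simultaneously} to pin down $x=y$; dropping either one would allow the $p_n$-exponents to differ by $2$, and the claimed equality of integers would fail.
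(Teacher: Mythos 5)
Your proof is correct and follows essentially the same route as the paper: for the substantive part (5), the paper likewise compares prime exponents coordinate by coordinate, deriving $a_n=c_n$ (your $x=y$) from the two non-divisibility hypotheses, checking that both products have $p_n$-exponent equal to $1$, and noting that coordinates $t\neq n$ are unaffected because $d_{r(k)}$ and $d_k$ agree there. Your dictionary $a\boxplus b=a\oplus b\oplus N$ is a tidy repackaging of the definition, but it only streamlines parts (1)--(4), which the paper already dismisses as immediate.
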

\begin{proof}
The first, second, third and fourth assertions easily follow from the definition. 

Assume that $i \neq j$. Then $d_{ij} \neq N$ and there is a prime divisor of $N/{d_{ij}}$. 
Assume that $d_{ij}$ is not divisible by $p_n$.
Let $k$ be an integer such that $d_{kj}$ is not divisible by $p_n$. Then, we denote by
$$
d_i = (a_1, \cdots, \,a_n)  \quad \text{and}\quad d_j=(b_1, \cdots, \,b_n);
$$
$$
d_k = (c_1, \cdots, \,c_n) \quad \text{and}\quad d_{r(k)}=(e_1, \cdots, \,e_n).
$$

By abuse of notation, we denote by $d_{ik} \times d_{kj} = (x_1, \,x_2, \cdots, \,x_n)$ and $d_{i r(k)} \times d_{r(k) j}=(y_1, \,y_2, \cdots, \,y_n)$, where $0 \leq x_t,~y_t \leq 2$. Thus, $d_{ik} \times d_{kj}=\prod_{t=1}^s p_t^{x_t}$. It suffices to show that $x_t = y_t$ for all $t$.
\begin{itemize}
\item Assume that $t\neq n$. From the definition of $d_{r(k)}$, we get $c_t = e_t$. Therefore $x_t = y_t$.
\item  Since $d_{ij}$ and $d_{kj}$ is not divisible by $p_n$, we get $a_n+b_n=1=c_n+b_n$. Therefore $a_n=c_n$. Since $d_{r(k)j}$ is divisible by $p_n$, we get $e_n+b_n+1\equiv 1 \pmod {2}$. Therefore $x_n=y_n=1$. 
\end{itemize}
\end{proof}

From now on, we follow the notations in \cite[\textsection 2]{Lg97}.
In our case, the $s\times s$ matrix $\Lambda$ on page 35 of \textit{op. cit.} is of the form 
$$
\Lambda_{ij}=\frac{1}{24} a_N(d_i, \,d_j),
$$
where
$$
a_N(a, \,b):=\frac{N}{(a, N/a)}\frac{(a, \,b)^2}{ab}.
$$
For examples, $a_N(1, \,p)=N/p$ and $a_N(N, \,p)=p$.
\begin{lem}
We get
$$
24\times \Lambda_{ij}= d_i \boxplus d_j =d_{ij} \in \cS.
$$
\end{lem}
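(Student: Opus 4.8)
The plan is to verify the stated identity one prime at a time, using the bit-vector description of the divisors of $N$. Since $24\Lambda_{ij}=a_N(d_i,d_j)$ by the definition of $\Lambda$, it suffices to show $a_N(d_i,d_j)=d_i\boxplus d_j$.

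First I would exploit that $N$ is square-free. For any divisor $d_i$ of $N$, the complementary divisor $N/d_i$ shares no prime factor with $d_i$ (such a shared prime would force $p^2 \mid N$), so $(d_i,N/d_i)=1$ and the first factor of $a_N$ trivializes:
$$
a_N(d_i,d_j)=\frac{N\,(d_i,d_j)^2}{d_i\, d_j}.
$$

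Next I would compute the $p_t$-adic valuation of the right-hand side for each $t$. Writing $d_i=(a_1,\dots,a_n)$ and $d_j=(b_1,\dots,b_n)$ in bit-vector form, the valuation of $N$ at $p_t$ is $1$, that of $(d_i,d_j)^2$ is $2$ when $a_t=b_t=1$ and $0$ otherwise, and that of $d_i d_j$ is $a_t+b_t$. Hence the exponent of $p_t$ in $a_N(d_i,d_j)$ equals $1+2\,[a_t=b_t=1]-a_t-b_t$. A check of the four cases $(a_t,b_t)\in\{0,1\}^2$ shows this exponent is $1$ exactly when $a_t=b_t$ and $0$ exactly when $a_t\neq b_t$; in particular it always lies in $\{0,1\}$, so $a_N(d_i,d_j)$ is a genuine divisor of $N$, i.e.\ an element of $\cS$. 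Comparing with the defining relation $c_t\equiv a_t+b_t+1\pmod 2$ for $d_i\boxplus d_j$, the exponent equals $c_t$ for every $t$, whence $a_N(d_i,d_j)=\prod_t p_t^{c_t}=d_i\boxplus d_j=d_{ij}$.

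I do not expect a genuine obstacle here: the content is elementary prime-by-prime bookkeeping once the square-free simplification $(d_i,N/d_i)=1$ is in place. The only points requiring care are remembering that this simplification genuinely uses square-freeness (it would fail for general $N$), and observing that the single four-case table simultaneously yields membership in $\cS$ and the match with $\boxplus$, so that no separate argument for $d_{ij}\in\cS$ is needed.
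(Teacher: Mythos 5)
Your proof is correct and is simply the detailed, prime-by-prime verification of what the paper dismisses as ``clear from the definition'': you use square-freeness to reduce $a_N(d_i,d_j)$ to $N(d_i,d_j)^2/(d_i d_j)$ and then match $p_t$-adic valuations against the defining congruence $c_t\equiv a_t+b_t+1\pmod 2$ of $\boxplus$. This is the same (and essentially the only) approach, just written out in full, including the worthwhile observation that the valuation computation also establishes $d_{ij}\in\cS$.
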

\begin{proof}
This is clear from the definition.
\end{proof}

\begin{lem}
Let $A:=(\mathrm{sgn}(d_{ij})\times (d_{ij}))_{1\leq i,\, j \leq s}$ be a $s \times s$ matrix. Then, $A=\frac{\varphi(N)\psi(N)}{24} \times \Lambda^{-1}$.
\end{lem}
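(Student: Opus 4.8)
The plan is to prove the stronger statement that $A\Lambda=\frac{\varphi(N)\psi(N)}{24}\,\mathrm{Id}$, where $\mathrm{Id}$ is the $s\times s$ identity matrix; since both $A$ and $\Lambda$ are symmetric (because $d_{ij}=d_{ji}$), this single identity simultaneously shows that $\Lambda$ is invertible and yields the desired formula $A=\frac{\varphi(N)\psi(N)}{24}\Lambda^{-1}$. Using the previous lemma, which gives $24\,\Lambda_{kj}=d_{kj}$, the $(i,j)$-entry of the product is
$$
(A\Lambda)_{ij}=\frac{1}{24}\sum_{k=1}^{s}\mathrm{sgn}(d_{ik})\,d_{ik}\,d_{kj},
$$
where each divisor $d_{ik},d_{kj}\in\cS$ is read as the integer $\prod_{t}p_t^{\bullet}$. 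Thus everything reduces to establishing the single identity $\sum_{k=1}^{s}\mathrm{sgn}(d_{ik})\,d_{ik}\,d_{kj}=\varphi(N)\psi(N)\,\delta_{ij}$.

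The key observation is that this sum factors over the primes dividing $N$. Write $d_i=(a_1,\dots,a_n)$ and $d_j=(b_1,\dots,b_n)$, and let $d_k=(c_1,\dots,c_n)$; as $k$ runs over $1,\dots,s$ the vector $(c_1,\dots,c_n)$ runs over all of $\{0,1\}^n$. By the definition of $\boxplus$, the $t$-th coordinate of $d_{ik}$ is $u_t\equiv a_t+c_t+1$ and that of $d_{kj}$ is $v_t\equiv c_t+b_t+1 \pmod 2$ (with $u_t,v_t\in\{0,1\}$), each depending only on $c_t$. Moreover $\mathrm{sgn}(d_{ik})=(-1)^{n-\omega(d_{ik})}=(-1)^{n}\prod_{t}(-1)^{u_t}$, so the summand equals $(-1)^{n}\prod_{t=1}^{n}(-p_t)^{u_t}p_t^{\,v_t}$. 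Since the summand is a product of per-coordinate functions and the index set is the product $\{0,1\}^n$, distributivity gives
$$
\sum_{k=1}^{s}\mathrm{sgn}(d_{ik})\,d_{ik}\,d_{kj}=(-1)^{n}\prod_{t=1}^{n}\left(\sum_{c_t\in\{0,1\}}(-p_t)^{u_t}\,p_t^{\,v_t}\right).
$$

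I would then evaluate each inner factor by the four cases for $(a_t,b_t)$. When $a_t\neq b_t$ the two terms of the factor are $+p_t$ and $-p_t$, so the factor vanishes; consequently the whole product is $0$ unless $a_t=b_t$ for every $t$, i.e.\ unless $d_i=d_j$. When $a_t=b_t$ (both possible common values yield the same outcome), the two terms are $1$ and $-p_t^2$, so the factor equals $1-p_t^2=-(p_t-1)(p_t+1)$. Hence for $i=j$ the product is $\prod_{t=1}^{n}(1-p_t^2)=(-1)^{n}\varphi(N)\psi(N)$, and multiplying by the outer sign $(-1)^{n}$ produces exactly $\varphi(N)\psi(N)$. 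This verifies the displayed identity and completes the proof.

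The only delicate point is the sign bookkeeping: one must carefully split $\mathrm{sgn}(d_{ik})=(-1)^{n}\prod_{t}(-1)^{u_t}$ so that the $(-1)^{u_t}$ factors can be absorbed into $(-p_t)^{u_t}$, and then check that the two copies of $(-1)^{n}$ — one from this sign decomposition and one from $\prod_t(1-p_t^2)$ — cancel to leave a positive answer. Everything else is an elementary coordinatewise computation, which is what the multiplicativity built into the definitions of $\boxplus$ and $\mathrm{sgn}$ is designed to facilitate.
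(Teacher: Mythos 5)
Your proof is correct, and it reaches the lemma by a genuinely different route than the paper. The paper computes $24\Lambda A$ entry by entry, leaning on the box-addition lemma: the diagonal entries are evaluated via part (3) (the map $d\mapsto d\boxplus d_i$ permutes $\cS$), which turns them into $\sum_{d\in\cS}\mathrm{sgn}(d)\,d^2=\prod_t(p_t^2-1)$, while the off-diagonal entries are killed by a sign-reversing pairing: choosing a prime $q\mid N/d_{ij}$, each $d_k$ is matched with the $d_{r(k)}$ satisfying $d_{r(k)j}=q\,d_{kj}$, and part (5) guarantees $d_{ik}\,d_{kj}=d_{ir(k)}\,d_{r(k)j}$ while the signs are opposite, so the terms cancel in pairs. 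You instead observe that the summand $\mathrm{sgn}(d_{ik})\,d_{ik}\,d_{kj}$ is a product of functions of the separate coordinates $c_t$ and that the index set is the product $\{0,1\}^n$, so the whole sum factors as $(-1)^n\prod_t\bigl(\sum_{c_t\in\{0,1\}}(-p_t)^{u_t}p_t^{v_t}\bigr)$; a two-term case check per prime then settles the diagonal and off-diagonal entries in one stroke (any coordinate with $a_t\neq b_t$ contributes a vanishing factor $p_t+(-p_t)=0$, and when $d_i=d_j$ every factor is $1-p_t^2$). In effect you have identified $A$ and $24\Lambda$ as Kronecker products, over the primes dividing $N$, of $2\times 2$ blocks, and verified the identity one prime at a time. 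What this buys is uniformity and self-containedness: you never invoke parts (3) and (5) of the box-addition lemma, and the evaluation $\sum_{d\in\cS}\mathrm{sgn}(d)\,d^2=\prod_t(p_t^2-1)$, which the paper asserts in a single step, is subsumed in your case analysis; the paper's proof, in exchange, recycles the structural facts it had already set up. One minor remark: the appeal to symmetry of $A$ and $\Lambda$ is unnecessary, since $A\Lambda=c\,\mathrm{Id}$ with $c=\frac{\varphi(N)\psi(N)}{24}\neq 0$ already forces $\det\Lambda\neq 0$, and a one-sided inverse of a square matrix over $\Q$ is two-sided, so $A=c\Lambda^{-1}$ follows directly.
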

\begin{proof}
We compute $B:=24 \times \Lambda \times A$. 
\begin{itemize}
\item Assume that $i=j$. Then, we have
$$
B_{ii}=\sum_{j=1}^s \mathrm{sgn}(d_{ij}) \times (d_{ij})^2=\sum_{k=1}^s \mathrm{sgn}(d_k)\times d_k^2 = \prod_{k=1}^n (p_k^2-1)=\varphi(N)\psi(N)
$$
because $\{ d_{ij} ~:~1\leq j \leq s \} = \cS$ by Lemma \ref{lem:boxproperties} (\ref{boxproperty3}).

\item Assume that $i \neq j$. Then, $d_{ij} \neq N$. Let $q$ be a prime divisor of $N/{d_{ij}}$. We denote by $\cT_j$ the subset of $\cS$ such that 
$$
\cT_j := \{ d_k \in \cS~:~(q, \,d_{kj})=1 \}.
$$
Then the size of $\cT_j$ is $s/2$. For each element $d_k \in \cT_j$, we can find $d_{r(k)} \in \cS$ such that $d_{r(k)j}=q \cdot d_{kj}$ by Lemma \ref{lem:boxproperties} (\ref{boxproperty3}). Moreover ${\cT_j}^c=\{ d_{r(k)} ~:~ d_k \in \cT_j  \}$ and we get $\mathrm{sgn}(d_{r(k)j})=-\mathrm{sgn}(d_{kj})$. For each $d_k \in \cT_j$, we get $d_{i k} \times d_{k j} = d_{i r(k)} \times d_{r(k) j}$ by Lemma \ref{lem:boxproperties} (\ref{boxproperty5}).
Therefore, we have
$$
B_{ij}=\sum_{k=1}^s \mathrm{sgn}(d_{kj}) (d_{ik} \times d_{kj})
=\sum_{d_k \in \cT_j } \mathrm{sgn}(d_{kj})  \left[ (d_{ik} \times d_{kj})-(d_{ir(k)} \times d_{r(k)j}) \right]=0.
$$
\end{itemize}
\end{proof}

The matrix form of $C_{M, N}$ in the set $S_2$ on \cite[P. 34]{Lg97} is then
$$
\text{ for } 1 \leq a \leq s,\quad (C_{M, N})_{a1}= \begin{cases}
(-1)^{\omega(d_a)}=(-1)^n \times {\mathrm{sgn}(d_a)} \!\!\quad\text{ if }~~~d_a \mid M, \\ 
~~\quad\ms 0 \quad\quad\ms\ms\quad\text{ otherwise}. \\
\end{cases}
$$

Finally, we prove the following lemma.
\begin{lem}
Let $E:=\Lambda^{-1}C_{M, N}$. Then for $1\leq a \leq s$ we have
$$
E_{a1}
=\mathrm{sgn}(d_{s+1-a})\times \frac{24}{\varphi(N)\psi(N/M)} \times \frac{d_{s+1-a}}{(d_{s+1-a},~M)}.
$$
In particular, $E_{s1}=(-1)^{\omega(N)}\frac{24}{\varphi(N)\psi(N/M)}$. Moreover if $M=N$, then we get
$$
E_{a1}=\mathrm{sgn}(d_{s+1-a}) \times \frac{24}{\varphi(N)}.
$$
\end{lem}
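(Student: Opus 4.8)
The plan is to leverage the preceding lemma, which computes $\Lambda^{-1}$ explicitly: since $A = \tfrac{\varphi(N)\psi(N)}{24}\Lambda^{-1}$ with $A_{ij} = \mathrm{sgn}(d_{ij})\times d_{ij}$, we have $\Lambda^{-1} = \tfrac{24}{\varphi(N)\psi(N)}A$. Thus $E = \Lambda^{-1}C_{M,N}$ has $a$-th entry
$$
E_{a1} = \frac{24}{\varphi(N)\psi(N)}\sum_{k=1}^s \mathrm{sgn}(d_{ak})\,d_{ak}\,(C_{M,N})_{k1}.
$$
First I would substitute the explicit form of the column $C_{M,N}$: the summand vanishes unless $d_k \mid M$, in which case $(C_{M,N})_{k1} = (-1)^{\omega(d_k)} = (-1)^n\,\mathrm{sgn}(d_k)$. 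Then using the multiplicativity of the sign (Lemma \ref{lem:boxproperties}(4)), $\mathrm{sgn}(d_{ak})\mathrm{sgn}(d_k) = \mathrm{sgn}(d_a)\mathrm{sgn}(d_k)^2 = \mathrm{sgn}(d_a)$, so the signs collapse and
$$
E_{a1} = \frac{24\,(-1)^n\,\mathrm{sgn}(d_a)}{\varphi(N)\psi(N)}\sum_{d_k \mid M} d_{ak}.
$$

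The heart of the argument is the evaluation of $\sum_{d_k\mid M} d_{ak}$. Here I would work in the coordinate description of $\cS$: writing $d_a = (a_1,\dots,a_n)$ and $d_k=(k_1,\dots,k_n)$, the box sum $d_{ak}$ has $i$-th coordinate equal to $1$ exactly when $a_i = k_i$, so $p_i \mid d_{ak}$ iff $a_i = k_i$. As $d_k$ runs over divisors of $M$, the coordinates $k_i$ with $p_i\mid M$ range freely over $\{0,1\}$ while $k_i = 0$ for $p_i \nmid M$; hence the sum factors over primes. The factor contributed by a prime $p_i\mid M$ is $p_i+1$ regardless of $a_i$ (one of the two choices $k_i=a_i$, $k_i\neq a_i$ gives the $p_i$-exponent $1$ and the other gives $0$), and the primes not dividing $M$ contribute the fixed monomial $\prod_{p_i\nmid M} p_i^{\,1-a_i}$. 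This monomial is precisely the part of $N/d_a = d_{s+1-a}$ coprime to $M$, i.e. $d_{s+1-a}/(d_{s+1-a},M)$, so
$$
\sum_{d_k\mid M} d_{ak} = \psi(M)\cdot \frac{d_{s+1-a}}{(d_{s+1-a},~M)}.
$$

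It then remains to simplify the prefactor. Using multiplicativity of $\psi$ on the square-free $N=M\cdot(N/M)$ gives $\psi(M)/\psi(N) = 1/\psi(N/M)$, and the sign identity $(-1)^n\mathrm{sgn}(d_a) = \mathrm{sgn}(d_{s+1-a})$, which follows from $\omega(d_{s+1-a}) = n-\omega(d_a)$ and the definition $\mathrm{sgn}(d) = (-1)^{n-\omega(d)}$, assembles everything into the claimed formula. The two special cases are immediate substitutions: taking $a=s$ gives $d_{s+1-a}=d_1=1$, $\mathrm{sgn}(1)=(-1)^{\omega(N)}$, and $(1,M)=1$, yielding $E_{s1}$; while $M=N$ forces $(d_{s+1-a},N)=d_{s+1-a}$ and $\psi(N/M)=\psi(1)=1$. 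I expect the only delicate point to be the bookkeeping in the factorization step, in particular verifying that the per-prime factor at $p_i\mid M$ is $p_i+1$ independently of whether $p_i$ divides $d_a$, but this is a short finite computation rather than a genuine obstacle.
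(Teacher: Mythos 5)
Your proof is correct, and it follows the paper's skeleton up to the crucial step: like the paper, you invoke the preceding lemma to write $E_{a1}=\frac{24}{\varphi(N)\psi(N)}\sum_k \mathrm{sgn}(d_{ak})\,d_{ak}\,(C_{M,N})_{k1}$ and collapse the signs to a common factor $(-1)^n\mathrm{sgn}(d_a)=\mathrm{sgn}(d_{s+1-a})$. Where you diverge is in evaluating $\sum_{d_k\mid M} d_{ak}$. The paper writes $D:=d_{s+1-a}$, uses the gcd formula $d_{ar}=\frac{D\,d_r}{(D,d_r)^2}$, pulls out the factor $D/(D,M)$, and then observes that $d_r\mapsto \frac{(D,M)\,d_r}{((D,M),d_r)^2}$ is an injection (in fact an involution, symmetric difference with $(D,M)$) of the divisors of $M$ into themselves, so the remaining sum is just $\sum_{d\mid M}d=\psi(M)$. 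You instead compute the sum prime by prime in the coordinate description: each $p_i\mid M$ contributes a factor $p_i+1$ independently of $a_i$, and the primes $p_i\nmid M$ contribute the fixed monomial $d_{s+1-a}/(d_{s+1-a},M)$, so $\psi(M)$ and the gcd factor emerge simultaneously from the factorization. The two computations are equivalent, but yours is slightly more self-contained: the paper's route rests on the injectivity of $d_r\mapsto D_r$, which it asserts without proof, whereas your factorization needs only the standard fact that a sum over a product set of coordinate-wise factors is a product of sums. The paper's version, on the other hand, packages the answer directly as the divisor sum $\sigma(M)=\psi(M)$, which is arguably the cleaner conceptual explanation of why $\psi(M)$ appears. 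Your handling of the two special cases ($a=s$ and $M=N$) matches the paper's statement exactly.
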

\begin{proof}
Let $D:=d_{s+1-a}=N/d_a$ and $E:=(D, \,M)$. Then, by direct calculation we have
$$
d_{ar}=d_a \boxplus d_r = \frac{D \times d_r}{(D, \,d_r)^2}
$$
and the sign of $(\Lambda^{-1})_{ak} \times (C_{M, N})_{k1}$ is $\mathrm{sgn}(d_a)\times \mathrm{sgn}(d_k)\times (-1)^n \times \mathrm{sgn}(d_k) = \mathrm{sgn}(D)$ for any divisor $d_k$ of $M$.
Therefore we have
\begin{equation*}
\begin{split}
\sum_{k=1}^s \mathrm{sgn}(d_{ak}) \times d_{ak} \times  (C_{M, \, N})_{k1}
&=\mathrm{sgn}(D) \times \sum_{d_r \mid M} \frac{D \times d_r}{(D, \,d_r)^2} \\
&= \mathrm{sgn}(D) \times \frac{D}{E} \times \sum_{d_r \mid M} \frac{E\times d_r}{(E,\,d_r)^2}.
\end{split}
\end{equation*}
We denote by  
$$
D_r:=\frac{E\times d_r}{(E,\,d_r)^2}=\frac{(D, \,M)\times d_r}{((D, \,M),~d_r)^2}.
$$ 
Then, $D_r$ is a divisor of $M$ and for two distinct divisors $d_{r_1},~d_{r_2}$ of $M$, we get $D_{r_1} \neq D_{r_2}$. Therefore, we have
$$
\sum_{d_r \mid M} \frac{E\times d_r}{(E,~d_r)^2}=\sum_{d_r \mid M} D_r=\sum_{d \mid M} d = \psi(M),
$$
which implies the result.
\end{proof}

Now we give a proof of the theorem above.

\begin{proof}[Proof of Theorem \ref{thm:ordercuspidal}]
We check the conditions in Proposition 1 in \textit{op. cit.} (We use the same notations.)
\begin{itemize}
\item The condition (0) implies that the order of $C_{M, N}$ is of the form $\frac{\varphi(N)\psi(N/M)}{24}\times g$ for some integer $g\geq 1$.
\item The condition (1) always holds unless $M=N$ because $\sum_{\delta \mid N} r_{\delta} \cdot \delta=0$. If $M=N$, then $\sum_{\delta \mid N} r_{\delta} \cdot \delta=(-1)^{n}g\varphi(N)\equiv 0 \pmod {24}$.
\item The condition (2) implies that $g=\num(\frac{24}{\varphi(N)\psi(N/M)}) \times h$ for some integer $h\geq 1$ because $\sum_{\delta \mid N} r_{\delta} \cdot N/\delta = g\varphi(N)\psi(N/M) \equiv 0 \pmod {24}$. 
\item The condition (3) always holds.
\item The condition (4) always holds unless $M$ is a prime because 
$\prod_{\delta \mid N} \delta^{r_{\delta}}=1$. If $M$ is a prime, then it implies that $g\varphi(N/M)$ is even because $\prod_{\delta \mid N} \delta^{r_{\delta}} = M^{-g\varphi(N/M)}$.
\end{itemize}

In conclusion, the order of $C_{M, N}$ is equal to $\num(\frac{\varphi(N)\psi(N/M)}{24})\times h$ for the smallest positive integer $h$ satisfying all the conditions above. Therefore we get $h=1$ unless all the following conditions hold:
\begin{enumerate}
\item $M$ is a prime; 
\item $\varphi(N/M)=1$;
\item $\num(\frac{24}{\varphi(N)\psi(N/M)})$ is odd.
\end{enumerate}
Moreover if all the conditions above hold, then $h=2$. By the first condition, $M$ is a prime.
By the second condition, either $N=M$ or $N=2M$. 
\begin{itemize}
\item Assume that $N=M$ is a prime greater than $3$. Then, $h=2$ if and only if $M\equiv 1\pmod {8}$. This is proved by Ogg \cite{Og73}.
\item Assume that $N=2M$. Then, $h=2$ if and only if $M \equiv 1 \pmod {8}$. This is proved by Chua and Ling \cite{CL97}.
\end{itemize} 
\end{proof}

\section{Eisenstein series}\label{sec:series}
As before, let $N=\prod_{i=1}^n p_i$ and $M=\prod_{i=1}^m p_i$ for $1\leq m\leq n$.
Let 
$$
e(z):=1-24\sum_{n \geq 1} \sigma(n) \times q^n
$$ 
be the $q$-expansion of \textit{Eisenstein series of weight $2$} of level $1$ as on \cite[p.~78]{M77}, where $\sigma(n)=\sum_{d\mid n} d$ and $q = e^{2 \pi i z}$.  
\begin{defn}
For any modular form $g$ of weight $k$ and level $A$; and a prime $p$ not dividing $A$, we define 
modular forms $[p]_k^+(g)$ and $[p]_k^-(g)$ of weight $k$ and level $pA$ by
$$
[p]_k^+(g)(z):= g(z) - p^{k-1} g(pz) \quad\text{and} \quad [p]_k^-(g)(z):= g(z) - g(pz).
$$ 
Using these operators, we define Eisenstein series of weight 2 and level $N$ by
$$
\cE_{M, N} (z):=[p_n]_2^- \circ \cdots \circ [p_{m+1}]_2^- \circ [p_m]_2^+ \circ \cdots \circ [p_1]_2^+(e)(z).
$$
(Note that $\cE_{M, N}=-24 E_{M, N}$, where $E_{M, N}$ is a normalized Eisenstein series in \cite[\textsection 2.2]{Yoo14}.)
\end{defn}

By Proposition 2.6 of \textit{op. cit.}, we know that $\cE_{M, N}$ is an eigenform for all Hecke operators and $I_{M, N}$ annihilates $\cE_{M, N}$. By Proposition 2.10 of \textit{op. cit.}, we can compute the residues of $\cE_{M, N}$ at various cusps. 

\begin{prop}\label{prop:residue}
We have
$$
\Res_{P_N} (\cE_{M, N})= \begin{cases}
(-1)^{n}\varphi(N) \!\!\!\quad\text{ if }~~M=N, \\ 
~~\quad\quad 0 ~\ms\quad~\text{otherwise}.
\end{cases} 
$$
Moreover, for a prime divisor $p$ of $N$ we have
$$
\Res_{P_{N/p}}(\cE_{N, N})=(-1)^{n-1}\varphi(N) \quad\text{and}\quad\Res_{P_M} (\cE_{M, N})=(-1)^{\omega(M)}\varphi(N)\psi(N/M)(M/N).
$$
\end{prop}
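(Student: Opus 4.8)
The plan is to treat the residue at a cusp as a local invariant and to track how it transforms under a single operator $[p]_2^{\pm}$, since $\cE_{M,N}$ is obtained from $e$ by applying exactly $n$ such operators. Writing a weight-$2$ form $g=\sum_{k\ge 0}a_k q^k$ as the differential $\omega_g=g\,\frac{dq}{q}$, I take $\Res_{P}(g)$ to be the residue of $\omega_g$ at the cusp $P$; at the width-one cusp $\infty=P_N$ this is just the constant term $a_0$. The base case is level $1$, where $e$ has a single cusp and $\Res_{\infty}(e)=a_0(e)=1$.

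The key step is the transformation rule at one prime. Fix $p\nmid A$ and a weight-$2$ form $g$ of level $A$, and view the two terms $g(z)$ and $g(pz)$ as pullbacks of $g$ along the two natural projections $\alpha,\beta\colon X_0(pA)\to X_0(A)$ with $\alpha(z)=z$ and $\beta(z)=pz$. Over a cusp $P_\delta$ of $X_0(A)$ (so $p\nmid\delta$) I would compute the two ramification indices by comparing widths ($A/\delta$ downstairs, $pA/\delta$ and $A/\delta$ upstairs): the preimage whose denominator is prime to $p$ is $\alpha$-ramified of index $p$ and $\beta$-unramified, while the preimage whose denominator is divisible by $p$ is $\alpha$-unramified and $\beta$-ramified of index $p$; the involution $w_p$ swaps the two preimages and exchanges $\alpha,\beta$, which makes this symmetric. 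Since taking residues along a pullback multiplies by the ramification index, and since the differential of $g(pz)$ equals $\tfrac1p\,\beta^{*}\omega_g$, I conclude that under $[p]_2^{+}(g)=g(z)-p\,g(pz)$ the residue is multiplied by $(p-1)$ at the cusp with denominator prime to $p$ and by $-(p-1)$ at the cusp with denominator divisible by $p$, whereas under $[p]_2^{-}(g)=g(z)-g(pz)$ it is multiplied by $\tfrac{p^2-1}{p}$ and by $0$ respectively. (Alternatively these rules can be read off from Proposition 2.10 of \cite{Yoo14}, and one checks them against $\sum_P\Res_P=0$.)

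Iterating for $i=1,\dots,n$, applying $[p_i]_2^{+}$ for $i\le m$ and $[p_i]_2^{-}$ for $i>m$, the residue at a cusp $P_d$ becomes a product of $n$ local factors, the $i$-th depending only on the sign of the $i$-th operator and on whether $p_i\mid d$. For $P_N$ every $p_i\mid N$, so the factors are $-(p_i-1)$ for $i\le m$ and $0$ for $i>m$; the product vanishes unless $m=n$, where it is $\prod_{i=1}^{n}(1-p_i)=(-1)^n\varphi(N)$. For $P_M$ one has $p_i\mid M\iff i\le m$, whence
$$
\prod_{i\le m}\bigl(-(p_i-1)\bigr)\times\prod_{i>m}\frac{p_i^2-1}{p_i}
=(-1)^{\omega(M)}\,\varphi(N)\,\psi(N/M)\,(M/N),
$$
using $\prod_i(p_i-1)=\varphi(N)$, $\prod_{i>m}(p_i+1)=\psi(N/M)$ and $\prod_{i>m}p_i=N/M$. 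For $P_{N/p}$ with $M=N$ (all operators $[\,\cdot\,]_2^{+}$), the prime $p$ contributes $(p-1)$ and every other $p_i\mid N/p$ contributes $-(p_i-1)$, giving $(-1)^{n-1}\varphi(N)$.

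The only genuinely delicate point is the local computation in the second paragraph: pinning down the two ramification indices and the normalizing factor $\tfrac1p$ so that the four cases $(p-1)$, $-(p-1)$, $\tfrac{p^2-1}{p}$, $0$ come out exactly as stated. Once those are correct, the rest is bookkeeping with multiplicative factors.
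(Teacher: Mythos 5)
Your argument is correct, and it reaches the proposition by a genuinely different route than the paper. The paper's proof is a three-step patchwork: the residue at $P_N$ is read off from the definition (it is the constant term), the residue at $P_1$ is quoted from the Deligne--Rapoport/Faltings--Jordan computation via \cite[Proposition 2.11]{Yoo14}, and the remaining two values are obtained by transport along Atkin--Lehner involutions, using that $w_p$ acts by $-1$ on $\cE_{M,N}$ for each prime $p\mid M$ (so $w_p$ carries the value at $P_N$ to $P_{N/p}$ when $M=N$, and $w_M$ carries the value at $P_1$ to $P_M$). You instead derive the local transformation rules for residues under a single $[p]_2^{\pm}$ directly from the two degeneracy maps $\alpha,\beta\colon X_0(pA)\to X_0(A)$: your ramification indices (read off from the widths $pA/\delta$ versus $A/\delta$, with $w_p$ exchanging the two points in each fibre) and the normalization $\omega_{g(p\,\cdot)}=\frac{1}{p}\,\beta^{*}\omega_{g}$ are right, and the resulting four factors $(p-1)$, $-(p-1)$, $\frac{p^2-1}{p}$, $0$ do reproduce all three asserted values by pure bookkeeping --- indeed they give the residue at every cusp $P_d$, not just the three in the statement. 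What your approach buys is self-containedness: no appeal to \cite{DR73}, \cite{FJ95}, \cite{Yoo14}, or to the $w_p$-eigenvalues of $\cE_{M,N}$; what the paper's buys is brevity. The one point you should patch is the base case: your pullback argument requires $g$ to be an honest modular form, but $e$ is only quasi-modular, so $\omega_e$ is not a differential on the level-one curve and ``$\Res_{\infty}(e)=1$'' is only a formal device. Start the induction at level $p_1$ instead, computing the two residues of $[p_1]_2^{+}(e)$ on $X_0(p_1)$ directly (constant term $1-p_1$ at the cusp $P_{p_1}$, and $p_1-1$ at $P_1$ by the residue theorem or by $w_{p_1}$-antiinvariance, cf. \cite[\textsection II.5]{M77}); from then on every form in your tower is genuinely modular and your rules apply as stated.
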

\begin{proof}
The first statement follows from the definition (cf. \cite[\textsection II.5]{M77}). For the second statement,
we use the method of Deligne-Rapoport \cite{DR73} (cf. 3.17 and 3.18 in \textsection VII.3) or of Faltings-Jordan \cite{FJ95} (cf. Proposition 3.34). Therefore the residue of $\cE_{M, N}$ at $P_1$ is $\varphi(N)\psi(N/M)(M/N)$ (cf. \cite[Proposition 2.11]{Yoo14}). 
Since the Atkin-Lehner operator $w_p$ acts by $-1$ on $\cE_{M, N}$ for a prime divisor $p$ of $M$, $w_M$ acts by $(-1)^{\omega(M)}$ and hence the result follows.
\end{proof}

\section{The index of an Eisenstein ideal}\label{sec:index}
As before, let $N=\prod_{i=1}^n p_i$ and $M=\prod_{i=1}^m p_i$ for some $1\leq m\leq n$. Let $\T:=\T(N)$.

Note that $\T/{I_{M, N}} \simeq \zmod {t}$ for some integer $t\geq 1$ \cite[Lemma 3.1]{Yoo14}. We compute the number $t$ as precise as possible.  

\begin{thm}\label{thm:index1}
The index of ${I_{N, N}}$ is equal to the order of $C_{N, N}$ up to powers of $2$. 
\end{thm}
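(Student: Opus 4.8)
The plan is to bound the integer $t$ with $\T/I_{N,N}\simeq\zmod{t}$ from both sides, the lower bound coming from the cuspidal group and the upper bound from the constant term of the Eisenstein series $\cE_{N,N}$. For the lower bound, recall that $I_{N,N}$ annihilates $C_{N,N}$ \cite[Proposition 2.13]{Yoo14}; since $C_{N,N}$ generates a cyclic group of order $\mathrm{ord}(C_{N,N})$ which is thereby a module over $\T/I_{N,N}\simeq\zmod{t}$, we get $\mathrm{ord}(C_{N,N})\mid t$. By Theorem~\ref{thm:ordercuspidal}, $\mathrm{ord}(C_{N,N})=\num(\varphi(N)/24)\times h$ with $h\in\{1,2\}$, so it remains to prove the reverse divisibility up to a power of $2$, i.e.\ that the odd part of $t$ divides $\num(\varphi(N)/24)$.

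To access $t$ from above I would first reinterpret it as a congruence number. The Hecke pairing $\T\times S_2(\Gamma_0(N);\Z)\to\Z$, $(T,g)\mapsto a_1(Tg)$, is perfect, so it identifies $\T=\Hom_\Z(S_2(\Gamma_0(N);\Z),\Z)$; under this identification the surjection $\T\twoheadrightarrow\T/I_{N,N}=\zmod{t}$ corresponds to a cusp form $f\in S_2(\Gamma_0(N);\Z)$ with $a_1(f)=1$ whose Hecke eigenvalues agree modulo $t$ with those of the normalized Eisenstein eigenform $E_{N,N}=-\tfrac1{24}\cE_{N,N}$. Consequently $a_n(f)\equiv a_n(E_{N,N})\pmod t$ for every $n\ge1$, so the odd part of $t$ is exactly the largest odd modulus of a congruence between $E_{N,N}$ and a cusp form.

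The heart of the argument is then a constant-term obstruction, fed by Proposition~\ref{prop:residue}. Fix an odd prime $\ell$ with $\ell\ge5$ and $\ell\nmid N$, and put $k=v_\ell(t)$; since $\ell\nmid24$ the form $G:=E_{N,N}-f$ lies in $M_2(\Gamma_0(N);\Z_{(\ell)})$, its coefficients $a_n(G)$ vanish modulo $\ell^k$ for all $n\ge1$, and its constant term at $\infty=P_N$ equals $a_0(E_{N,N})=(-1)^{n-1}\varphi(N)/24$ because $f$ is cuspidal. Reducing modulo $\ell$, the $q$-expansion of $\overline G$ at $\infty$ is the constant $\overline{a_0(G)}$; but for $\ell\ge5$ no nonzero weight-$2$ form over $\F_\ell$ can have constant $q$-expansion, since $2\not\equiv0\pmod{\ell-1}$ forces its filtration to be incompatible with weight $0$. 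By the $q$-expansion principle $\overline G$ is determined by this expansion, hence $\overline G=0$ and $\ell\mid a_0(G)$; a d\'evissage on $k$ (replacing $G$ by $G/\ell\in M_2(\Gamma_0(N);\Z_{(\ell)})$ and iterating) then yields $\ell^k\mid a_0(E_{N,N})$, i.e.\ $\ell^k\mid\varphi(N)$. Thus $v_\ell(t)\le v_\ell(\varphi(N))=v_\ell(\num(\varphi(N)/24))=v_\ell(\mathrm{ord}(C_{N,N}))$ for every $\ell\ge5$.

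It remains to treat $\ell=3$ and the primes $\ell\mid N$, and here I expect the main difficulty. At $\ell=3$ one has $\ell-1=2$, so the Hasse invariant already has weight $2$ and constant $q$-expansion $1$; the filtration obstruction above therefore collapses and does not by itself give $3\mid a_0(G)$, so one must argue more carefully, comparing $G$ with the logarithmic-derivative Eisenstein series attached to the modular unit whose divisor is $\mathrm{ord}(C_{N,N})\sum_{d\mid N}(-1)^{\omega(d)}P_d$ (the unit underlying Theorem~\ref{thm:ordercuspidal}), whose residue divisor is, by Proposition~\ref{prop:residue} together with the Atkin--Lehner relations $w_p\cE_{N,N}=-\cE_{N,N}$ for $p\mid N$, exactly $\varphi(N)\sum_{d\mid N}(-1)^{\omega(d)}P_d$, in order to recover the single extra power of $3$ hidden in the denominator $24$. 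For $\ell\mid N$ the curve $X_0(N)$ has bad reduction at $\ell$, so one must replace the smooth $q$-expansion principle by the Deligne--Rapoport model used already in Proposition~\ref{prop:residue}. Combining the clean case $\ell\ge5$ with these two refinements gives that the odd part of $t$ divides $\mathrm{ord}(C_{N,N})$, which together with the lower bound proves that $t$ equals $\mathrm{ord}(C_{N,N})$ up to a power of $2$.
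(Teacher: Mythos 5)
Your lower bound (annihilation of $C_{N,N}$ gives $\mathrm{ord}(C_{N,N})\mid t$) and your treatment of primes $\ell\geq 5$ with $\ell\nmid N$ are correct, and in that range your route is essentially the paper's: you phrase the congruence via the perfect pairing (the paper's cusp form $\delta$ over $\T/\cI$ is the same object), and where the paper descends the constant-expansion form to level $1$ by Ohta's Lemma (2.1.1) and then quotes Mazur's Proposition 5.6, you instead invoke the Katz filtration argument directly at level $N$; both are fine. The problem is that the two cases you explicitly defer --- $\ell=3$ and $\ell\mid N$ --- are precisely where all the substance of the paper's proof lies, and your sketches for them are not proofs and do not contain the ideas that make those cases work.

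At $\ell=3$ the obstruction you rely on genuinely disappears (the Hasse invariant is a weight-$2$ form with constant $q$-expansion $1$), and moreover the bound you would need is strictly stronger than the constant-term bound: one must show $\alpha(3)\leq v_3(\varphi(N))-1$, not merely $\alpha(3)\leq v_3(\varphi(N))$. The paper achieves this with two ingredients absent from your sketch: (i) Lemma \ref{lem:maximal 3 A=0}, asserting that maximality of $(3,\,I_{N,N})$ forces $\varphi(N)\equiv 0 \pmod 3$, whose proof is a nontrivial weight-$4$ argument (in the $3$-new case one passes to weight $4$ via Ohta's Proposition (2.2.4), compares with an Eisenstein series built from $E_4$, and uses the level-$1$ non-existence result modulo $9$); and (ii) a residue comparison at the cusp $P_{N/p}$ for a prime $p\equiv 1\pmod 3$ (which exists by (i)), carried out modulo $3^{\alpha(3)+1}$ using the $q$-expansion principle on the component of $X_0(N)_{\F_3}$ containing $P_N$; this yields $\Res_{P_{N/p}}(8\delta)\equiv (p+1)A/3 \pmod {3^a}$ and hence exactly the one extra power of $3$ you acknowledge is ``hidden in the denominator $24$'' but never recover --- your proposed comparison with logarithmic derivatives of modular units is not carried out and there is no indication it produces this saving. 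For $\ell\mid N$, ``replace the smooth $q$-expansion principle by the Deligne--Rapoport model'' does not suffice: on the bad-reduction model $X_0(N)_{\F_\ell}$ has two components, so a mod $\ell$ form is not determined by its expansion at $P_N$, and a constant expansion there gives no contradiction by itself. The paper instead splits according to whether $\m$ is $\ell$-old (then one lowers the level to $N/\ell$, where the powers of $\ell$ in $A$ and in $\varphi(N/\ell)$ agree) or $\ell$-new; in the new case the key mechanism is that $\ell$-newness forces $w_\ell$-anti-invariance, so $\delta$ and $\cE_{N,N}$ lift to regular differentials on $X_0(N)_{\Z_{(\ell)}}$ (Ohta, Proposition (1.4.9)), and the contradiction comes from Ohta's Proposition (2.2.6) that no nonzero regular differential over $\F_\ell$ has constant $q$-expansion. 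Without these case divisions and the regular-differential structure, your argument does not close for any $\ell\mid N$, so as it stands the proposal proves the theorem only for the primes $\ell\geq 5$ prime to $N$.
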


\begin{thm}\label{thm:index2}
If $M\neq N$ and $N/M$ is odd (resp. even), then the index of ${I_{M, N}}$ and the order of $C_{M, N}$ coincide (resp. coincide up to powers of $2$). 
\end{thm}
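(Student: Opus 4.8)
The plan is to establish the asserted (in)equality through two opposite divisibilities, handling odd primes uniformly and isolating the prime $2$ at the end. Throughout write $t$ for the index of $I_{M,N}$, so that $\T/I_{M,N}\cong\Z/t\Z$, and write $c:=\text{ord}(C_{M,N})$, which Theorem \ref{thm:ordercuspidal} evaluates as $\num\!\big(\tfrac{\varphi(N)\psi(N/M)}{24}\big)\cdot h$ with $h\in\{1,2\}$; note that for $M\neq N$ with $N/M$ odd one has $h=1$, since neither special case of that theorem (which force $N=M$ or $N=2M$) can occur.

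First I would dispose of the easy divisibility $c\mid t$. Because $\T/I_{M,N}\cong\Z/t\Z$ is generated by the image of $T_1=1$, the structure map $\Z\to\T/I_{M,N}$ is surjective with kernel $t\Z$; equivalently the scalar $t\cdot\mathrm{id}$ lies in $I_{M,N}$. Since $I_{M,N}$ annihilates $C_{M,N}$ (recalled in \textsection\ref{sec:cuspidalgroup}), we get $t\cdot C_{M,N}=0$, whence $c\mid t$. This already yields the lower bound for $t$ at every prime.

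The substance is the reverse divisibility, for which I would use the Eisenstein series $\cE_{M,N}$ and the residues of Proposition \ref{prop:residue}. The key point is that a prime power $\ell^k\mid t$ is the same datum as a ring homomorphism $\T\to\Z/\ell^k\Z$ killing $I_{M,N}$, that is, a mod $\ell^k$ congruence realizing the Eisenstein eigensystem $T_r\mapsto r+1,\ U_p\mapsto 1,\ U_q\mapsto q$ inside $\T$. Since $\T$ acts faithfully through its action on $J_0(N)$, such a congruence realizes the eigensystem on the cuspidal part, where constant terms vanish; comparing with $\cE_{M,N}$ itself forces its normalized constant terms to be $\equiv 0\pmod{\ell^k}$. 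Thus $v_\ell(t)$ is bounded by the $\ell$-adic valuation of those constant terms. Because $M\neq N$, Proposition \ref{prop:residue} gives $\Res_{P_N}(\cE_{M,N})=0$, so $\cE_{M,N}$ is cuspidal at $\infty$, while its remaining residues are governed by $\varphi(N)\psi(N/M)$, the factor $24$ entering through the normalization of $e$. These are exactly the arithmetic data packaged in the matrix $\Lambda$ of \textsection\ref{sec:cuspidalgroup} that produced $c$, which is why the two computations match. Tracking valuations this way yields $v_\ell(t)\le v_\ell\big(\num(\tfrac{\varphi(N)\psi(N/M)}{24})\big)$ for every odd $\ell$; combined with the previous paragraph and $v_\ell(c)=v_\ell\big(\num(\tfrac{\varphi(N)\psi(N/M)}{24})\big)$, this forces $v_\ell(t)=v_\ell(c)$ at all odd primes.

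It remains to analyze $\ell=2$, which is where the two cases diverge and where I expect the main difficulty to lie. The congruence argument above is cleanest for odd $\ell$; the obstruction at $2$ occurs precisely when $2\mid N/M$, i.e. when $N/M$ is even, for then $2$ is one of the primes $q$ with $U_q\equiv q$, and the $2$-adic behaviour of $\cE_{M,N}$ (equivalently of $U_2$) at the cusps is not controlled by this method, so one only obtains agreement up to a power of $2$ — this is the even case, and already the content of Theorem \ref{thm:index1} for $M=N$. When $N/M$ is odd we have $2\nmid N/M$, so every prime $q\mid N/M$ is odd and contributes an even factor $q+1$ to $\psi(N/M)$; exploiting this divisibility together with the vanishing $\Res_{P_N}(\cE_{M,N})=0$, one can make the constant-term valuation at $2$ sharp and pin down $v_2(t)=v_2(c)$ (recall $h=1$ here), giving exact equality. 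The hard part is thus the $2$-primary bookkeeping: upgrading the inequality at $2$ to an equality, for which the hypothesis that $N/M$ is odd is exactly what is needed.
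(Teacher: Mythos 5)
Your easy direction ($c \mid t$, i.e.\ $\alpha(\ell)\geq\beta(\ell)$ in the paper's notation) is fine and matches the paper. The genuine gap is in the reverse divisibility. The constant-term argument you sketch --- compare the mod $\ell^k$ cusp form $\delta$ attached to a homomorphism $\T\to\zmod{\ell^k}$ killing $I_{M,N}$ with $\cE_{M,N}$, and conclude that the residues of $\cE_{M,N}$ must vanish mod $\ell^k$ --- is indeed the paper's argument, but it only works for primes $\ell\nmid N/M$. The comparison proceeds via the $q$-expansion principle at $P_N$, which identifies $-24\delta$ with $\cE_{M,N}$ only on the irreducible component $D$ of $X_0(N)_{\F_\ell}$ containing $P_N$; hence one may only conclude that residues of $\cE_{M,N}$ vanish at cusps lying on $D$. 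The cusp $P_M$, where Proposition \ref{prop:residue} gives the nonzero residue $\pm\varphi(N)\psi(N/M)(M/N)$ that produces the bound $v_\ell(t)\le\beta(\ell)$, lies on $D$ only when $\ell\nmid N/M$ or $\ell\nmid N$; if $\ell \mid N/M$ it sits on the other component and the method yields nothing. So your claim that this "yields $v_\ell(t)\le v_\ell(\num(\varphi(N)\psi(N/M)/24))$ for every odd $\ell$" fails precisely for odd primes $\ell$ dividing $N/M$, which certainly occur when $M\neq N$ and $N/M$ is odd. This is not a small omission: it is the case to which the paper devotes the bulk of its proof, via level lowering. There one shows, using $\T(N)^{\ell\hyp\old}=\T(N/\ell)[U_\ell]$ with $\T(N/\ell)$ generated by the $T_n$, $\ell\nmid n$ (Ribet/Wiles, valid for odd $\ell$), and Katz's theorem on mod $\ell$ forms whose $q$-expansions are power series in $q^\ell$, that $T_\ell-\ell-1\in I$, whence $\T(N)/\cI\simeq\T(N/\ell)/(\ell^{\alpha(\ell)},\,I_{M,N/\ell})$; iterating reduces to a level where $\ell\nmid N/M$, and the subcase $\ell=3$ requires a further delicate argument (Lemma \ref{lem:maximal 3 A=0}, the $q$-expansion principle, and residue comparisons). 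None of this machinery, nor any substitute for it, appears in your proposal.

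Relatedly, your final paragraph locates the difficulty in the wrong place. When $N/M$ is odd, the prime $2$ satisfies $2\nmid N/M$, so it is handled by exactly the same residue argument as all other primes not dividing $N/M$, together with your (correct) observation that $h=1$ there; no special "2-primary bookkeeping" is needed. The loss of powers of $2$ in the even case occurs because when $2\mid N/M$ neither argument applies at $\ell=2$: the cusp $P_M$ is on the wrong component, and the level-lowering argument genuinely requires $\ell$ odd. In short, the hard content of the theorem is the treatment of odd primes dividing $N/M$, which your proposal treats as the uniform easy case.
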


Before starting to prove the theorems, we introduce some notations. 
\begin{defn}
For a prime $\ell$, we define $\alpha(\ell)$ and $\beta(\ell)$ as follows:
$$
\left( \T/{I_{M, N}} \right)\otimes_{\Z} \Z_{\ell} \simeq \zmod {\ell^{\alpha(\ell)}} \quad \text{and}
$$
$$
\ell^{\beta(\ell)} \text{ is the exact power of } \ell \text{ dividing } \num\left( \frac{\varphi(N)\psi(N/M)}{24} \times h\right),
$$
where $h$ is the number in Theorem \ref{thm:ordercuspidal}.
\end{defn}

Since $I_{M, N}$ annihilates $C_{M, N}$, we get $\alpha(\ell) \geq \beta(\ell)$ (cf. \cite[proof of Theorem 3.2]{Yoo14}). Therefore to prove Theorems \ref{thm:index1} and \ref{thm:index2},  
it suffices to show that $\alpha(\ell) \leq \beta(\ell)$ for all (or odd) primes $\ell$.  If $\alpha(\ell)=0$, then there is nothing to prove. Thus, we now assume that $\alpha(\ell)\geq 1$. Let 
$$
\cI:=(\ell^{\alpha(\ell)},~{I_{M, N}})
$$ 
and let $\delta$ be a cusp form of weight $2$ and level $N$ over the ring $\T/\cI \simeq \zmod {\ell^{\alpha(\ell)}}$ whose $q$-expansion (at $P_N$) is 
$$
\sum\limits_{n\geq 1} (T_n \mod \cI) \times q^n.
$$

Now we prove the theorems above.
\begin{proof}[Proof of Theorem \ref{thm:index1}]
First, let $\ell=3$ and $M=N$. 
Let $E:=\cE_{N, N} \pmod {3^{\alpha(3)+1}}$ and $A=(-1)^{\omega(N)} \varphi(N)$.
Since $24\delta$ is a cusp form of weight $2$ modulo $3^{\alpha(3)+1}$ (cf. \cite[p.~86]{M77}), 
$E+24\delta$ is a modular form of weight $2$ and level $N$ over $\zmod {3^{\alpha(3)+1}}$. 
Let $a=\min\{\alpha(3),~\beta(3)+1\}$. Then, by the $q$-expansion principle \cite[\textsection 1.6]{Ka73} we have
$$
E+24 \delta \equiv Ae \pmod {3^{a+1}}
$$
on the irreducible component $C$ of $X_0(N)_{\F_{\ell}}$ containing $P_N$
because $Ae$ is a modular form of weight $2$ over $\zmod {(12A)}$ and  $(3^{\alpha(3)+1},~12A)=3^{a+1}$. 
By the following lemma, we get $A \equiv 0 \pmod 3$ and hence we can choose a prime divisor $p$ of $N$ congruent to $1$ modulo $3$.
Note that the cusp $P_{N/p}$ belongs to $C$. By Proposition \ref{prop:residue}, $\Res_{P_{N/p}}(E)=-A$ and $\Res_{P_{N/p}} (Ae) \equiv pA \pmod {12A}$ by Sublemma on \cite[p.~86]{M77}. 
Combining all the computations above, we have
$$
 \Res_{P_{N/p}} (8\delta) \equiv \frac{(p+1)A}{3} \pmod {3^a}.
$$
Since $\delta$ is a cusp form modulo ${3^{\alpha(3)}}$, we get $\Res_{P_{N/p}} (8\delta) \equiv 0 \pmod {3^{\alpha(3)}}$ and hence $3^{\beta(3)} \equiv 0 \pmod {3^{\alpha(3)}}$. In other words, 
we get $\alpha(3)\leq \beta(3)$.

Next, let $\ell \geq 5$ and $M=N$. Let $F:=\cE_{N, N} \pmod {\ell^{\alpha(\ell)}}$. Then, $f:=F+24\delta$ is a modular form of weight $2$ and level $N$ over $\zmod {\ell^{\alpha(\ell)}}$ whose $q$-expansion is $A$. Basically the inequality $\alpha(\ell)\leq \beta(\ell)$ follows from the non-existence of a mod $\ell$ modular form of weight $2$ and leven $N$ whose $q$-expansion is a non-zero constant (cf. \cite[chap. II, Proposition 5.6]{M77} and \cite[Proposition (2.2.6)]{Oh14}).

\begin{itemize}
\item
If $\ell \nmid N$, then by Ohta \cite[Lemma (2.1.1)]{Oh14}, we can find a modular form $g$ of weight $2$ and level $1$ such that $f(z)=g(Nz)$. Therefore $A \equiv 0 \pmod {\ell^{\alpha(\ell)}}$ (cf. \cite[chap. II, Proposition 5.6]{M77}) and hence we get $\alpha(\ell)\leq \beta(\ell)$. 

\item
Assume that $\ell\mid N$ and $\m:=(\ell,~\cI)$ is not $\ell$-new. Then, the argument basically follows from the previous case because the exact powers of $\ell$ dividing $A$ and $\varphi(N/{\ell})$ coincide. (For more detailed argument on lowering the level when $\ell \geq 5$, see the proof of Theorem \ref{thm:index2} below.)

\item 
Assume that $\ell \mid N$ and $\m:=(\ell,~\cI)$ is $\ell$-new. 
Then, we can lift $\delta$ to a modular form $\widetilde{\delta}$ of weight $2$ and level $N$ over $\Z_{(\ell)}$ satisfying $w_{\ell}(\widetilde{\delta})=-\widetilde{\delta}$, where 
$\Z_{(\ell)}$ is the localization of $\Z$ at $\ell$. 
Therefore $\widetilde{\delta}$ determines a regular differential on $X_0(N)_{\Z_{(\ell)}}$ over $\Z_{(\ell)}$ (cf. \cite[Proposition (1.4.9)]{Oh14}). 
Similarly, we can lift $F$ to $\cE_{N, N}$ as well and $w_{\ell}(\cE_{N, N})=-
\cE_{N, N}$.
Therefore 
$f=\cE_{N, N} +24\widetilde{\delta} \pmod {\ell^{\alpha(\ell)}}$ can be regarded as a regular differential on $X_0(N)_{\Z_{(\ell)}}$ over $\zmod {\ell^{\alpha(\ell)}}$ whose $q$-expansion is $A$. 
If $\alpha(\ell)\geq \beta(\ell)+1$, then $g=f \pmod {\ell^{\beta(\ell)+1}}$ is a regular differential over $\zmod {\ell^{\beta(\ell)+1}}$. Moreover $\ell^{-\beta(\ell)} \times g$ can be regarded as a regular differential over $\F_{\ell}$ whose $q$-expansion is a non-zero constant (cf. \cite[p. 86]{M77}), which is a contradiction (cf. \cite[Proposition (2.2.6)]{Oh14}). Thus, we get $\alpha(\ell)\leq \beta(\ell)$. 
\end{itemize}
\end{proof}

\begin{lem}\label{lem:maximal 3 A=0}
If $\m:=(3,~I_{N, N})$ is maximal, then $A=(-1)^{\omega(N)} \varphi(N) \equiv 0 \pmod 3$.
\end{lem}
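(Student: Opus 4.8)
\emph{The plan} is to convert the maximality of $\m$ into the existence of a weight-$2$ modular form over $\Z/9$ whose $q$-expansion at $P_N$ is the constant $A$, and then to force $A\equiv 0\pmod 3$ by invoking the failure of the Hasse invariant (i.e.\ of $E_2$) to lift to weight $2$ modulo $9$.

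First I would use maximality to produce a cusp form. Since $\m=(3,I_{N,N})$ is maximal we have $\alpha(3)\geq 1$, so the cusp form $\delta$ over $\Z/3^{\alpha(3)}$ with $q$-expansion $\sum_{n\geq 1}(T_n\bmod\cI)\,q^n$ exists, and $G:=\cE_{N,N}+24\delta$ is a genuine weight-$2$ modular form of level $N$ over $\Z/3^{\alpha(3)+1}$ (because $24\delta$ is a cusp form modulo $3^{\alpha(3)+1}$; cf.\ \cite[p.~86]{M77}). The decisive computation is on the $q$-expansion of $G$ at $P_N$. Writing the $n$-th coefficient of $\cE_{N,N}$ as $-24\lambda_n$, where $\lambda_n\in\Z$ is the Eisenstein eigenvalue (so $\lambda_r=r+1$ for $r\nmid N$ and $\lambda_p=1$ for $p\mid N$), and noting $a_n(\delta)\equiv\lambda_n\pmod{3^{\alpha(3)}}$, one finds $a_n(G)\equiv -24\lambda_n+24\lambda_n\equiv 0\pmod{3^{\alpha(3)+1}}$ for all $n\geq 1$, since $24\cdot 3^{\alpha(3)}=8\cdot 3^{\alpha(3)+1}$. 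Thus the $q$-expansion of $G$ at $P_N$ is \emph{exactly} the constant $A$; reducing modulo $9$ (possible as $\alpha(3)+1\geq 2$) gives a weight-$2$ form over $\Z/9$ of level $N$ with constant $q$-expansion $A$.

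It remains to deduce $A\equiv 0\pmod 3$, and here the argument splits. If $3\nmid N$, I would apply Ohta \cite[Lemma (2.1.1)]{Oh14} to write $G\equiv g(Nz)\pmod 9$ for a weight-$2$ level-$1$ form $g$ over $\Z/9$, necessarily with constant $q$-expansion $A$; the non-liftability of the Hasse invariant modulo $9$ (equivalently, that $E_2$ is not a true modular form there; cf.\ \cite[p.~86]{M77}) then forces $A\equiv 0\pmod 3$. If $3\mid N$ and $\m$ is $3$-old, I would pass to the corresponding maximal ideal of $\T(N/3)$, apply the previous case (note $3\nmid N/3$), and conclude via $\varphi(N)=2\,\varphi(N/3)$. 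If $3\mid N$ and $\m$ is $3$-new, then $U_3\equiv 1$ (as $M=N$) and $w_3$ acts by $-1$, so the same construction lifts $G$ to a regular differential on $X_0(N)_{\Z_{(3)}}$; its reduction modulo $3$ is a regular differential over $\F_3$ with constant $q$-expansion $A$, and a nonzero constant is excluded by \cite[Proposition (2.2.6)]{Oh14}, giving $A\equiv 0\pmod 3$ once more.

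The hard part is exactly the phenomenon that makes this separate lemma necessary. For $\ell\geq 5$ one closes the argument modulo $\ell$ because there is no weight-$2$ form over $\F_\ell$ with nonzero constant $q$-expansion; but modulo $3$ such a form does exist, namely a multiple of the Hasse invariant $\cong E_2\bmod 3$, so in the $3\nmid N$ and $3$-old cases one cannot conclude modulo $3$ and must descend to level $1$ and work modulo $9$, where $E_2$ fails to be modular. The delicate bookkeeping is ensuring that $G$ has an exactly constant $q$-expansion rather than one that is merely constant to low order, which is what dictates working modulo $3^{\alpha(3)+1}$ instead of $3^{\alpha(3)}$.
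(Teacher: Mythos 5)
Your construction of the weight-$2$ form over $\zmod 9$ with exactly constant $q$-expansion $A$, and your handling of the cases $3\nmid N$ and ($3\mid N$, $\m$ $3$-old), coincide with the paper's proof. The gap is in the $3$-new case. There you dismiss a regular differential over $\F_3$ with nonzero constant $q$-expansion by citing \cite[Proposition (2.2.6)]{Oh14}, but that non-existence statement is exactly what is \emph{not} available at $\ell=3$; the paper invokes it only for $\ell\geq 5$ (in the proof of Theorem \ref{thm:index1}), and its proof of the present lemma conspicuously avoids it. The reason it fails: by \cite[Proposition (2.2.4)]{Oh14}, a ($3$-new) regular differential over $\F_3$ on $X_0(N)_{\Z_{(3)}}$, $N=3M$, corresponds to a Katz form of weight $3+1=4$ and level $M$ over $\F_3$ with the same $q$-expansion. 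A mod-$\ell$ form of weight $k$ whose $q$-expansion is a nonzero constant is a constant multiple of a power of the Hasse invariant, so it exists precisely when $(\ell-1)\mid k$. For $\ell\geq 5$ one has $(\ell-1)\nmid(\ell+1)$, which is what powers \cite[Proposition (2.2.6)]{Oh14}; but for $\ell=3$ one has $4=2(\ell-1)$, and $A\cdot H^2$ ($H$ the Hasse invariant, of weight $2$ and $q$-expansion $1$) is a weight-$4$ form over $\F_3$ with $q$-expansion the nonzero constant $A$. So reducing modulo $3$ yields no contradiction whatsoever. Note that this is the very Hasse-invariant phenomenon you flag in your closing paragraph for the prime-to-$3$ cases; it reappears, one weight up, in the $3$-new case, and that is precisely why this case is the hard one.

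What the paper actually does in the $3$-new case is to push the mod-$9$ idea into weight $4$ and level $M$: it converts $\eta:=\delta\pmod\m$ via \cite[Proposition (2.2.4)]{Oh14} into a weight-$4$ form $\zeta$ of level $M$ over $\F_3$, notes (Mazur's multiplication trick, \cite[p.~86]{M77}) that $240\zeta$ is then a well-defined weight-$4$ form over $\zmod 9$, and forms $h:=G-240\zeta$, where $G:=[p_n]_4^+\circ\cdots\circ[p_2]_4^+(E_4)$ is the weight-$4$ Eisenstein series of level $M$ with constant term $\prod_{i=2}^n(1-p_i^3)$. The non-constant coefficients cancel modulo $9$, so $h$ has constant $q$-expansion; by \cite[Lemma (2.1.1)]{Oh14} it descends to a weight-$4$, level-$1$ form over $\zmod 9$ with constant $q$-expansion $\prod_{i=2}^n(1-p_i^3)$, and by \cite[p.~308]{Oh14} such a form exists only when this constant is divisible by $3$. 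Since $1-p_i^3\equiv 1-p_i\pmod 3$, this forces $\varphi(M)\equiv 0\pmod 3$, i.e. $A\equiv 0\pmod 3$. In short, the obstruction must be extracted modulo $9$ (where the coefficients of $E_4$, divisible by $240$ but not by $9$, still carry information), not modulo $3$; your proof needs this weight-$4$, mod-$9$ argument, or an equivalent substitute, to close the $3$-new case.
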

\begin{proof}
As above, let $E:=\cE_{N, N} \pmod {9}$ and $\eta:=\delta \pmod {\m}$. Let $f:=E+24\eta$ be a modular form of weight $2$ and level $N$ over $\zmod 9$ whose $q$-expansion is $A$.

First, assume that $3$ does not divide $N$. Then by Ohta \cite[Lemma (2.1.1)]{Oh14}, we can find a modular form $g$ of weight $2$ and level $1$ over $\zmod 9$ such that $f(z)=g(Nz)$. By Mazur \cite[chap. II, Proposition 5.6]{M77}, we get $A \equiv 0 \pmod 3$. 

Next, assume that $p_1=3$ and $N=3M$. If $\m$ is $3$-old, then the result follows from the previous case. Thus, we further assume that $\m$ is $3$-new. 
Then as above, we can regard $\eta$ as a regular differential on $X_0(N)_{\Z_{(\ell)}}$ over $\F_3$ and hence 
there is a modular form $\zeta$ of weight $3+1$ and level $M$ over $\F_3$ which has the same $q$-expansion as $\eta$ by Ohta \cite[Proposition (2.2.4)]{Oh14}. By the same argument as on \cite[p. 86]{M77}, $240 \zeta$ is a modular form of weight $4$ and level $M$ over $\zmod 9$.
Let $E_4$ be the usual Eisenstein series of weight $4$ and level $1$:
$$
E_4 (z)= 1 + 240 \sum\limits_{n=1}^\infty \sigma_3(n) \times q^n,
$$
where $\sigma_3(n)=\sum_{d \mid n} d^3$ and $q=e^{2\pi i z}$. 
Let $G(z):=[p_n]_4^+ \circ \cdots \circ [p_2]_4^+(E_4)(z)$ be an Eisenstein series of weight $4$ and level $M$ whose constant term is $\prod_{i=2}^n (1-p_i^3)$.
Now we consider the modular form $h:=G \pmod 9 - 240 \zeta$ of weight $4$ and level $M$ over $\zmod 9$. Since the $q$-expansion of 
$h$ is $\prod_{i=2}^n (1-p_i^3)$, there is a modular form $H$ of weight $4$ and level $1$ over $\zmod 9$ such that $h(z)=H(Mz)$ by Ohta \cite[Lemma (2.1.1)]{Oh14}.
However if $A\not\equiv 0 \pmod 3$, then there is no such a modular form over $\zmod 9$ (cf. \cite[p. 308]{Oh14}) because $1-p_i^3 \equiv 1-p_i \pmod 3$. Therefore we get $A \equiv 0 \pmod 3$.
\end{proof}

\begin{proof}[Proof of Theorem \ref{thm:index2}]
Since we assume that $\alpha(\ell)\geq 1$, $\m:=(\ell,~I_{M, N})$ is maximal.

First, assume that $N/M$ is divisible by an odd prime $\ell$. Then $U_{\ell}\equiv \ell \equiv 0 \pmod {\m}$ and hence $\m$ is not $\ell$-new.
Thus, we get $\T(N)/\cI \simeq \T(N)^{\ell\hyp\old}/\cI$. 
Let $R$ be the common subring of $\T(N/{\ell})$ and $\T(N)^{\ell\hyp\old}$, which is generated by all $T_n$ with $\ell \nmid n$.  Then, 
as in the proof of Lemma \ref{lem:eigenvalue}, 
$\T(N/\ell) = R [T_\ell]$ and $\T(N)^{\ell\hyp\old} = R[U_\ell]$.
Note that if $\ell$ is odd then $R=\T(N/\ell)$ by Ribet \cite[p. 491]{Wi95} and $\T(N)^{\ell\hyp\old}
\simeq R[X]/{(X^2-T_{\ell}X+\ell)}$. Let $I$ be the ideal of $R$ generated by all the generators of $\cI$ but $U_{\ell}-\ell$. Then, we show that $T_{\ell}-\ell-1 \in I$ as follows.
Note that the kernel $K$ of the composition of the maps 
$$
R=\T(N/{\ell}) \inj \T(N)^{\ell\hyp\old}=R[U_{\ell}]/(U_{\ell}^2-T_{\ell}U_{\ell}+\ell) \surj \T(N)^{\ell\hyp\old}/{\cI}\simeq \zmod {\ell^{\alpha(\ell)}}
$$ 
(sending $T_n$ to $T_n \pmod {\cI}$) is $(I, ~\ell(T_{\ell}-\ell-1))$ and this composition is clearly surjective. Thus,
we get $R/I \surj R/K$. Since all the generators of $R$ are congruent to integers modulo $I$; and $I$ contains $\ell^{\alpha(\ell)}$, we have $R/I = R/K \simeq \zmod {\ell^{\alpha(\ell)}}$; in particular $\ell(T_{\ell}-\ell-1) \in I$.
Let $f$ be a cusp form over $R/I$ whose $q$-expansion is
$\sum_{n\geq 1} (T_n \mod {I})\times q^n$. 
\begin{itemize}
\item
Suppose that $\ell \geq 5$. Let $E:=\cE_{M,\,{N/{\ell}}} \pmod {\ell^{\alpha(\ell)}}$ and let $g:=24f+E$. Then, $g$ is a modular form over $R/I \simeq \zmod {\ell^{\alpha(\ell)}}$ whose $q$-expansion is of the form $\sum_{k\geq 0} a_k \times q^{\ell k}$. 
By Katz \cite[Corollaries (2) and (3) of the main theorem]{Ka76}, we get $g=0$ and hence
$a_1/{24}=T_{\ell}-\ell-1 \in I$. (Note that the constant term $a_0$ must be $0$ and hence we get $\alpha(\ell)\leq \beta(\ell)$ as well if $M=N/{\ell}$.)

\item
Suppose that $\ell=3$. Let $E:=\cE_{M,\,{N/{\ell}}} \pmod {3^{\alpha(3)+1}}$ and let $g:=24f+E$.
Then, $g$ is a modular form over $\zmod {3^{\alpha(3)+1}}$ whose $q$-expansion is of the form
$\sum_{k\geq 0} a_k  \times q^{\ell k}$. 
If $a_1 =0 \in \zmod {3^{\alpha(3)+1}}$ then $a_1/24 = T_3 - 4 = 0 \in \zmod {3^{\alpha(3)}} \simeq R/I$ and hence $T_3-4 \in I$.
Therefore it suffices to show that $a_1 = 0 \in \zmod {3^{\alpha(3)+1}}$.

If $M \neq N/{\ell}$ then $a_0=0$ and hence $g=0$ by Corollaries (3) and (4) in \textit{loc. cit.} Therefore $a_1=0$.

Suppose that $M=N/{\ell}$. Then, $a_0= (-1)^{\omega(M)} \varphi(M)$. Note that the exact power of $3$ dividing $a_0$ is $3^{\beta(3)+1}$.
Since $3(T_3-4) \in I$, $g \pmod {3^{\alpha(3)}}$ is a modular form over $\zmod {3^{\alpha(3)}}$ whose
$q$-expansion is a constant $a_0$. Since $a_0 \times e$ is a modular form over $\zmod {3^{\beta(3)+2}}$ whose $q$-expansion is $a_0$, by the $q$-expansion principle $g=24f+E \equiv a_0 \times e \pmod {3^{a}}$, where $a=\min \{\alpha(3),\, \beta(3)+2 \}$.
Since $\m$ is $\ell$-old, there is the corresponding maximal ideal $\fn$ of $\T(N/\ell)$ to $\m$. Hence by Lemma \ref{lem:maximal 3 A=0}, $a_0 \equiv 0 \pmod 3$ and we can find a prime divisor $p$ of $N/{\ell}$ such that $p\equiv 1 \pmod 3$.
By comparing the residues of $g$ and $a_0 \times e$ at $P_{N/p}$ as in the proof of Theorem \ref{thm:index1}, we get $(p+1)a_0 \equiv 0 \pmod {3^a}$ and hence $\alpha(3) \leq \beta(3)+1$. Therefore $h:=3^{-\alpha(3)}\times g$ is a modular form over $\F_3$. Again by Corollary (5) in \textit{loc. cit.} and by Mazur \cite[Proposition 5.6 (b)]{M77}, we get $h=3^{-\alpha(3)}\times a_0\times e$; in particular, 
$3^{-\alpha(3)} \times a_1 \equiv 0 \pmod 3$, i.e., $a_1 = 0 \in \zmod {3^{\alpha(3)+1}}$ as desired.
\end{itemize}
(Note that in the first case, we can allow the case where $M=N$ by taking $E:=\cE_{M/{\ell}, N/{\ell}} \pmod {\ell^{\alpha(\ell)}}$, which is used in the proof of Theorem \ref{thm:index1} above.) Therefore we have $I=(\ell^{\alpha(\ell)},~I_{M, N/{\ell}})$ and
$$
\T(N)/\cI \simeq \T(N)^{\ell\hyp\old}/\cI \simeq R/{I} = \T(N/{\ell})/(\ell^{\alpha(\ell)},~I_{M, N/{\ell}}).
$$
Accordingly, it suffices to prove that $\alpha(\ell)\leq \beta(\ell)$ for primes $\ell$ not dividing $N/M$ because $\ell \nmid \ell^2-1$. 

Next, we assume that $\ell$ does not divide $N/M$.
Let $F:=\cE_{M, N} \pmod {24\ell^{\alpha(\ell)}}$ and $\delta$ be a cusp form as above.
Since $F$ and $-24\delta$ have the same $q$-expansions (at $P_N$), they coincide on the irreducible component $D$ of $X_0(N)_{\F_{\ell}}$, which contains $P_N$. Note that the cusp $P_{M}$ belongs to $D$ because $\ell \nmid N/M$. Since $-24\delta$ is a cusp form over the ring $\zmod {24\ell^{\alpha(\ell)}}$, the residue of $F$ at $P_M$ must be zero. By Proposition \ref{prop:residue}, $\varphi(N)\psi(N/M)(M/N) \equiv 0 \pmod {24\ell^{\alpha(\ell)}}$.
Therefore we get $\alpha(\ell)\leq \beta(\ell)$. (Note that if $\ell=2$, then $h=1$ with the assumption
that $M\neq N$ and $\ell \nmid N/M$.)
\end{proof}

If $\ell$ is odd and $\ell \nmid \varphi(N)$, we prove the following.

\begin{prop}\label{prop:nonmaximal}
Let $\ell$ be an odd prime and $\m:=(\ell,~I_{1, N})$. Hence, we assume that $\ell \nmid \varphi(N)$ from the definition (cf. \textsection \ref{sec:Eisenstein}).
Then, $\m$ cannot be maximal.
\end{prop}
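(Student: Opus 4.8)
The plan is to argue by contradiction. Suppose $\m=(\ell,~I_{1,N})$ is maximal. Since $U_q-q\in\m$ for every prime $q\mid N$ and $\ell$ is odd, I first dispose of the case $\ell\mid N$: then $U_{\ell}\equiv \ell\equiv 0\pmod{\m}$, so $\m$ is not $\ell$-new (an $\ell$-new Eisenstein maximal ideal has $U_{\ell}\equiv\pm1$, which is a unit), hence it is $\ell$-old, and the level-lowering argument in the proof of Theorem~\ref{thm:index2} (valid because $\ell\nmid \ell^2-1$) lets me descend to $(\ell,~I_{1,N/\ell})$ on $\T(N/\ell)$, where $\ell\nmid\varphi(N/\ell)$ still holds. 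So I may assume $\ell\nmid N$ (if the descent reaches level $1$ we are already done, since $\T(1)=0$ has no maximal ideal). Maximality then provides a weight-$2$ level-$N$ cusp form $\delta$ over the field $\T/\m$ whose $q$-expansion is $\sum_{n\ge1}(T_n \bmod \m)\,q^n$, exactly as constructed in \textsection\ref{sec:index}; its coefficients are the Eisenstein eigenvalues, i.e.\ $T_r\equiv r+1$ for $r\nmid N$ and $U_q\equiv q$ for $q\mid N$.

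The crux is a structural identity for the quasi-modular Eisenstein series $\cE_{1,N}:=[p_n]_2^-\circ\cdots\circ[p_1]_2^-(e)$ attached to the eigensystem of $\m$. Writing each minus-operator as $[p_i]_2^-(g)(z)=[p_i]_2^+(g)(z)+(p_i-1)\,g(p_iz)$ and expanding the composition, I claim
\[
\cE_{1,N}=\varphi(N)\,e(Nz)+\cG ,
\]
where $\cG$ is a genuine weight-$2$ modular form of level $N$. Indeed, every term of the expansion other than the ``all-shifts'' term contains at least one operator $[p_i]_2^+$, and a single application of $[p_i]_2^+$ annihilates the depth-one quasi-modularity of $e$ (the non-holomorphic completion $e-\tfrac{3}{\pi\,\mathrm{Im}\,z}$ cancels in $[p_i]_2^+$, yielding the honest weight-$2$ level-$p_i$ Eisenstein series); since the shift $g\mapsto g(p_j\,\cdot)$ and the operators $[p_j]_2^+$ commute and preserve modularity, every such term is modular, while the all-shifts term equals $\prod_i(p_i-1)\,e(Nz)=\varphi(N)\,e(Nz)$.

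Now I exploit $\ell\nmid\varphi(N)$. By the normalization $\cE_{1,N}=-24E_{1,N}$ together with $a_n(\delta)=T_n\equiv c_n$, the series $\cE_{1,N}$ and $-24\delta$ have the same $q$-expansion modulo $\ell$, so $\cE_{1,N}+24\delta\equiv0\pmod{\ell}$. Combined with the identity this gives, on $q$-expansions,
\[
\varphi(N)\,e(Nz)\equiv -(\cG+24\delta)\pmod{\ell},
\]
whose right-hand side is an honest weight-$2$ level-$N$ modular form over $\T/\m$. As $\ell\ge 5$ and $\ell\nmid\varphi(N)$, I may divide by the unit $\varphi(N)$ to conclude that $e(Nz)$ is congruent mod $\ell$ to a genuine weight-$2$ level-$N$ form $h$, whose $q$-expansion is supported on $q^{N}$-powers and has constant term $1$. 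Since $\ell\nmid N$, Ohta's descent \cite[Lemma (2.1.1)]{Oh14} yields $h(z)=h_0(Nz)$ for a weight-$2$ level-$1$ form $h_0$ over $\T/\m$ with $q$-expansion $\equiv e\pmod{\ell}$, in particular with constant term $1$. This contradicts the non-existence of a weight-$2$ level-$1$ mod-$\ell$ modular form with non-zero constant term \cite[chap.~II, Proposition~5.6]{M77}. Hence $\m$ cannot be maximal.

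The main obstacle is the identity $\cE_{1,N}=\varphi(N)\,e(Nz)+\cG$: it is precisely the assertion that the failure of $\cE_{1,N}$ to be modular is proportional to $\varphi(N)$, and it is what makes the hypothesis $\ell\nmid\varphi(N)$ bite. Establishing it cleanly in the algebraic (Katz) setting, rather than via the analytic completion of $e$, is the delicate point; it can alternatively be packaged by tracking the residues of $\cE_{1,N}$ at the cusps as in Proposition~\ref{prop:residue}. Finally, the prime $\ell=3$ does not permit division by $24$ and must be treated separately, exactly as in the proofs of Theorems~\ref{thm:index1} and \ref{thm:index2}: one works modulo $3^{\alpha+1}$, applies the $q$-expansion principle, compares residues at a cusp $P_{N/p}$ for a prime $p\equiv1\pmod3$, and invokes Lemma~\ref{lem:maximal 3 A=0}; the conclusion is the same, the relevant constant being again a $3$-unit multiple of $\varphi(N)$.
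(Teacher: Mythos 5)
Your treatment of $\ell\geq 5$ is correct, and it is a genuinely repackaged version of the paper's argument. Where you expand $\cE_{1,N}=\prod_i\bigl([p_i]_2^++(p_i-1)V_{p_i}\bigr)(e)$ all at once to get $\cE_{1,N}=\varphi(N)\,e(Nz)+\cG$ with $\cG$ honestly modular, the paper uses the single-prime instance of the same identity, $\cE_{1,N}-\cE_{p,N}=(p-1)\,\cE_{1,N/p}(pz)$, and lowers the level one prime at a time (Ohta's Lemma (2.1.1) plus division by the unit $p-1$ at each step), terminating in the prime-level base case where $\cE_{N,N}+24\delta$ has $q$-expansion $(1-N)e(Nz)$ and Mazur's Corollary 5.11 applies. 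You instead descend from level $N$ to level $1$ in one shot and contradict Mazur's Proposition 5.6; both routes rest on exactly the same two external inputs (Ohta's descent lemma and Mazur's level-one nonexistence results), and yours is arguably cleaner. One caveat you rightly flag yourself: before reducing mod $\ell$ you must know $\cG$ is a form over $\Z_{(\ell)}$, not merely over $\C$; this follows either from the $q$-expansion principle (integral $q$-expansion at $\infty$, $\ell\nmid N$) or from the observation that every term in your expansion is a $V_d$-shift of the integral Eisenstein series $[p_i]_2^+(\cdots(e))$.

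The genuine gap is at $\ell=3$, which the proposition must cover. Your proposed fix --- work modulo $3^{\alpha+1}$, compare residues at a cusp $P_{N/p}$ with $p\equiv 1\pmod 3$, and invoke Lemma \ref{lem:maximal 3 A=0} --- cannot be executed here. First, the standing hypothesis of the proposition is $\ell\nmid\varphi(N)$, i.e.\ \emph{no} prime divisor $p$ of $N$ satisfies $p\equiv 1\pmod 3$, so the cusp $P_{N/p}$ at which you want to compare residues does not exist; in Theorem \ref{thm:index1} that cusp is available only \emph{because} Lemma \ref{lem:maximal 3 A=0} first forces $3\mid\varphi(N)$ there. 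Second, Lemma \ref{lem:maximal 3 A=0} concerns $\m=(3,I_{N,N})$ and its proof hinges on $\cE_{N,N}$ having nonzero constant term $\pm\varphi(N)$; for $M=1$ the series $\cE_{1,N}$ has constant term $0$, and the statement ``maximality of $(3,I_{1,N})$ forces $3\mid\varphi(N)$'' is precisely the proposition at $\ell=3$, so invoking such an analogue is circular. The correct repair stays inside your own framework and needs no cusps at all: work over $\Z/9\Z$, where $24\delta$ is a cusp form mod $9$ (Mazur, p.~86), so your identity yields a level-$N$ form over $\Z/9\Z$ whose $q$-expansion is $\varphi(N)e(Nz)$; divide by the unit $\varphi(N)$, descend to level $1$ by Ohta's Lemma (2.1.1) (valid over $\Z/9\Z$), and contradict Mazur's Proposition 5.6(c), which rules out a weight-two level-one form over $\Z/9\Z$ with $q$-expansion $e$. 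This mod-$9$ route is exactly what the paper does in its prime-level case for $\ell=3$ (via Mazur's Lemma 5.9 and Proposition 5.6(c)); as written, your deferral to the residue-comparison machinery of Theorems \ref{thm:index1} and \ref{thm:index2} points at the wrong tool and leaves the case $\ell=3$ unproved.
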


\begin{proof}
Assume that $\m$ is maximal. If $\ell \mid N$, then $\m$ cannot be $\ell$-new because $U_{\ell} \equiv \ell \equiv 0 \pmod {\m}$. Therefore there is a maximal ideal $\fn:=(\ell,~I_{1, N/{\ell}})$ in the Hecke ring $\T(N/{\ell})$ of level $N/{\ell}$. Thus, we may assume that $\ell \nmid N$. 
Then as above, $\delta$ is a mod $\ell$ cusp form of weight $2$ and level $N$. Let $g=\cE_{N, N} \pmod {24\ell}+24\delta$ be a modular form over $\zmod{24\ell}$.

First, consider the case where $n=\omega(N)=1$. 
\begin{itemize}
\item
If $\ell\geq 5$, then $g$ is a mod $\ell$ modular form of weight $2$ and level $N$ as above. Since the $q$-expansion of $g$ is 
$$
(1-N) + 24(1-N) \sum\limits_{i=1}^\infty \sigma(d) \times q^{dN},
$$
we get $\frac{g}{1-N}=0$ by Mazur \cite[chap. II, Corollary 5.11]{M77}, which is a contradiction. Therefore $\m$ is not maximal. 

\item
If $\ell=3$, then $g$ is a modular form of weight $2$ and level $N$ over $\zmod 9$ as above. Then, by Mazur \cite[chap. II, Lemma 5.9]{M77}, there is a modular form $G$ of level $1$ over $\zmod 9$ such that $G(Nz)=\frac{g(z)}{1-N}$. However this contradicts Proposition 5.6(c) in \cite[chap. II]{M77}. Therefore $\m$ is not maximal.
\end{itemize}

Next, consider the case where $n\geq 2$. Let $F_N(q): = (-1/24) \times \cE_{1, N} \in \Z[[q]]$ be a formal $q$-expansion. Since $\m$ is maximal, $\delta \equiv F_N(q) \modl$ is a mod $\ell$ modular form of weight $2$ and level $N$.
Then, by the following lemma, we can lower the level of $\delta$ because $\varphi(N)\not\equiv 0 \modl$. Therefore the result follows from the case where $n=1$.

\end{proof}

\begin{lem}
Let $N=pD$ be a square-free integer with $D>1$ and $p$ a prime.
Assume that $p \not\equiv 1 \modl$ and $\ell \nmid N$.  
Let $F_N(q) : = (-1/24) \times \cE_{1, N} \in \Z[[q]]$ be a formal $q$-expansion. If $F_{N}(q) \modl$ is the $q$-expansion of a mod $\ell$ modular form of weight $2$ and level $N$, then 
$F_{D}(q) \modl$ is also the $q$-expansion of a mod $\ell$ modular form of weight $2$ and level $D$.
\end{lem}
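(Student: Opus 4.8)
The plan is to exploit the elementary identity relating the two degeneracy operators at $p$, and then to descend through the operator $g(z)\mapsto g(pz)$ using Ohta's lemma. Write $p$ for the prime with $N=pD$. Since here $M=1$, every operator defining $\cE_{1,N}$ is of minus type, and isolating the factor at $p$ gives $\cE_{1,N}=[p]_2^-(\cE_{1,D})=\cE_{1,D}(z)-\cE_{1,D}(pz)$, so that $F_N=F_D-F_D(p\,\cdot\,)$. I would then introduce the companion series $\cE_{p,N}:=[p]_2^+(\cE_{1,D})=\cE_{1,D}(z)-p\,\cE_{1,D}(pz)$; because the operators for distinct primes commute, this is exactly the series $\cE_{M,N}$ with $M=p$. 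The reason for using the plus operator is the operator identity $[p]_2^-(g)-[p]_2^+(g)=(p-1)\,g(pz)$, which yields
\[
\cE_{1,N}-\cE_{p,N}=(p-1)\,\cE_{1,D}(pz),\qquad\text{equivalently}\qquad (p-1)\,F_D(pz)=F_N-G,\quad G:=-\tfrac{1}{24}\cE_{p,N}\in\Z[[q]].
\]

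Next I would argue that $G\bmod\ell$ is a genuine mod $\ell$ modular form of weight $2$ and level $N$. Because $M=p\neq 1$, the series $\cE_{p,N}$ contains a plus-type factor, and this single factor annihilates the quasimodular anomaly accumulated by the minus factors, so $\cE_{p,N}$ is a true holomorphic modular form of weight $2$ and level $N$ over $\Q$ (this is the content of the discussion in \textsection\ref{sec:series} together with \cite[Prop.~2.6]{Yoo14}), in contrast to $\cE_{1,N}$, which is only quasimodular. Its $q$-expansion lies in $24\,\Z[[q]]$, so $G=-\cE_{p,N}/24$ has integral, hence $\ell$-integral, $q$-expansion for every $\ell\nmid N$ (including $\ell=3$), and therefore $G\bmod\ell$ is a mod $\ell$ modular form of level $N$. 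By hypothesis $F_N\bmod\ell$ is also a mod $\ell$ modular form of level $N$, so the right-hand side $F_N-G$ reduces to a mod $\ell$ modular form of level $N$. Since $p\not\equiv 1\pmod{\ell}$, the factor $p-1$ is a unit modulo $\ell$; dividing by it shows that $F_D(pz)\bmod\ell$ is the $q$-expansion of a mod $\ell$ modular form of weight $2$ and level $N$.

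Finally, this last form has $q$-expansion supported on exponents divisible by $p$ and equal to $\big(F_D\bmod\ell\big)(q^p)$. Applying Ohta's descent \cite[Lemma (2.1.1)]{Oh14} with $d=p\mid N$ (legitimate because $\ell\nmid N$), it must arise as $g(pz)$ for a mod $\ell$ modular form $g$ of weight $2$ and level $N/p=D$; since $q\mapsto q^p$ is injective on $q$-expansions, $g$ has $q$-expansion $F_D\bmod\ell$, which is precisely the desired conclusion. The main obstacle is the middle paragraph: one must know that the plus-operator series $\cE_{p,N}$ is genuinely modular (so that its reduction is a bona fide mod $\ell$ modular form) and that this reduction survives even at $\ell=3$, where the normalization $-1/24$ is delicate. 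This is exactly why both the divisibility of the coefficients by $24$ and the hypothesis $p\not\equiv1\pmod\ell$ (which makes $p-1$ invertible) are indispensable.
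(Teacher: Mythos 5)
Your proof is correct and takes essentially the same route as the paper: the paper likewise introduces $G(q):=(-1/24)\times\cE_{p,N}$, uses the formal identity $F_N(q)-G(q)=(p-1)F_D(q^p)$, descends to level $D$ via Ohta's Lemma (2.1.1), and concludes by dividing by the unit $p-1$ modulo $\ell$. The only difference is that you make explicit what the paper leaves implicit, namely the operator identity $[p]_2^--[p]_2^+=(p-1)\,g(pz)$ behind the formal identity and the fact that $\cE_{p,N}$ is a genuine modular form with $\ell$-integral normalized $q$-expansion, so that $G\bmod\ell$ is a bona fide mod $\ell$ form.
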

\begin{proof}
Let $G(q):=(-1/24) \times \cE_{p, N}$. Then, as formal $q$-expansions we get
$$
F_N(q)-G(q) = (p-1)F_D(q^p).
$$
Therefore if $F_N(q) \modl$ is the $q$-expansion of a mod $\ell$ modular form of level $N$, then there is a mod $\ell$ modular form of level $D$ whose $q$-expansion is $(p-1)F_D(q) \modl$ by Ohta \cite[Lemma (2.1.1)]{Oh14}. Therefore the result follows because $p\not\equiv 1 \modl$.
\end{proof}

\section{Proof of the main theorem}\label{sec:proof}
In this section, we prove our main theorem.
\begin{thm}\label{thm:maintheorem}
Let $\m:=(\ell,~I_{M, N})$ be a maximal ideal of $\T(N)$. Then $\cC_N[\m] \neq 0$.
\end{thm}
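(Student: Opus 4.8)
The plan is to produce an explicit nonzero element of $\cC_N[\m]$ as an integer multiple of the cuspidal divisor class $C_{M,N}$, whose order is already under control. Recall from Section~\ref{sec:cuspidalgroup} that $C_{M,N}\in\cC_N$, that $I_{M,N}$ annihilates $C_{M,N}$, and (Theorem~\ref{thm:ordercuspidal}) that its order is $n:=\num\!\left(\frac{\varphi(N)\psi(N/M)}{24}\right)\!\cdot h$. Since $1\in\T$ acts as the identity on $C_{M,N}$, the action map $\T\to\End(\langle C_{M,N}\rangle)=\Z/n\Z$ is surjective with $I_{M,N}$ in its kernel, so it factors through $\T/I_{M,N}\cong\Z/t\Z\twoheadrightarrow\Z/n\Z$ (here $t$ is the index of $I_{M,N}$), forcing $n\mid t$. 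The whole point is then: if $\ell\mid n$, the element $y:=(n/\ell)\,C_{M,N}$ has order exactly $\ell$ and is annihilated both by $\ell$ and by $I_{M,N}$, hence by $\m=(\ell,\,I_{M,N})$, so $0\neq y\in\cC_N[\m]$. Everything reduces to showing $\ell\mid n$.

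For odd $\ell$ this is immediate. Maximality of $\m$ forces $\ell\mid t$: otherwise $\ell$ would be a unit in $\Z/t\Z$ and $(\ell,I_{M,N})$ would be the unit ideal. In the notation of Section~\ref{sec:index} this says $\alpha(\ell)\geq 1$. By Theorems~\ref{thm:index1} and~\ref{thm:index2} the index $t$ and the order $n$ agree up to a power of $2$ (and exactly when $M\neq N$ and $N/M$ is odd); in particular $v_\ell(t)=v_\ell(n)$ for every odd $\ell$, i.e. $\beta(\ell)=\alpha(\ell)\geq 1$, so $\ell\mid n$ and the previous paragraph finishes this case.

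The remaining case $\ell=2$ is the genuine difficulty, since Theorems~\ref{thm:index1} and~\ref{thm:index2} deliberately match $t$ and $n$ only up to powers of $2$ and therefore say nothing about $\beta(2)$. Here the classification of Section~\ref{sec:Eisenstein} pins $M$ down to $M=N$ or $M=N/2$, so by Theorem~\ref{thm:ordercuspidal} the order is $n=\num\!\left(\frac{\varphi(N)}{24}\right)\!\cdot h$ or $n=\num\!\left(\frac{\varphi(N)}{8}\right)\!\cdot h$, and it suffices to prove $2\mid n$ whenever $(2,I_{M,N})$ is maximal. In both cases $v_2(n)=\max\{0,\,v_2(\varphi(N))-3\}+v_2(h)$, with $v_2(\varphi(N))=\sum_{p\mid N,\;p\text{ odd}}v_2(p-1)$ and $h=2$ occurring only in the prime cases of Theorem~\ref{thm:ordercuspidal}.

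I expect the main obstacle to be exactly this $2$-adic statement: to show that the arithmetic conditions guaranteeing that $(2,I_{M,N})$ is maximal --- equivalently, the existence of a mod-$2$ Eisenstein congruence at level $N$ --- force either $v_2(\varphi(N))\geq 4$ or $h=2$, hence $2\mid n$. I would attack this by upgrading the inequality $\alpha(2)\leq\beta(2)$ in these two restricted configurations, revisiting the mod-$\ell$ Eisenstein-series comparison of Sections~\ref{sec:series} and~\ref{sec:index} (the $q$-expansion principle together with the residue computation of Proposition~\ref{prop:residue}) with $\ell=2$, rather than invoking the index theorems as black boxes; the delicate point is that weight-$2$ modular forms in characteristic $2$ behave worse than for $\ell\geq3$, which is precisely why the index theorems were only stated up to $2$-power factors.
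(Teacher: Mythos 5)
Your treatment of odd $\ell$ is correct and is essentially the paper's own argument: maximality forces $\alpha(\ell)\geq 1$, Theorems \ref{thm:index1} and \ref{thm:index2} give $\alpha(\ell)=\beta(\ell)$ for odd $\ell$, and then $(n/\ell)\,C_{M,N}$ is a nonzero element of $\cC_N[\m]$.

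The case $\ell=2$ is a genuine gap, and not merely because you leave it as a plan rather than a proof: the statement you reduce to is false, so the plan cannot be carried out. You propose to show that maximality of $(2,\,I_{M,N})$ forces $2 \mid \num\left(\frac{\varphi(N)\psi(N/M)}{24}\right)\cdot h$, i.e.\ to upgrade the inequality $\alpha(2)\leq \beta(2)$. Take $N=M=7\cdot 13=91$. Since $U_{13}-13\equiv U_{13}-1 \pmod{2}$, one has $(2,\,I_{91,91})=(2,\,I_{7,91})$; and since $7\neq 91$ and $91/7=13$ is odd, Theorem \ref{thm:index2} gives \emph{exactly} $\T/I_{7,91}\simeq \Z/42\Z$, because $42=\num\left(\frac{\varphi(91)\psi(13)}{24}\right)$ is the order of $C_{7,91}$. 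As $42$ is even, $(2,\,I_{7,91})=(2,\,I_{91,91})$ is a maximal Eisenstein ideal, so $\alpha(2)\geq 1$; yet the order of $C_{91,91}$ is $\num\left(\frac{72}{24}\right)=3$ (here $h=1$ since $91$ is not prime), so $\beta(2)=0$ and $\langle C_{91,91}\rangle[\m]=0$. Thus the inequality $\alpha(2)\leq\beta(2)$ genuinely fails at $\ell=2$, and the torsion witnessing the theorem does not live in $\langle C_{M,N}\rangle$ at all. The missing idea, which is exactly the turn the paper takes, is to change the cuspidal divisor rather than fight for $2$-adic control of the index of the original $I_{M,N}$: modulo $2$ the distinction between $U_q-1$ and $U_q-q$ disappears for odd $q$, so $\m=(2,\,I_{M,N})$ can be rewritten as $(2,\,I_{p,N})$ for a single prime $p\mid N$ chosen with $N/p$ odd (take $p=2$ when $N$ is even). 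For that ideal, either Theorem \ref{thm:index2} applies with exact equality and maximality forces $2$ to divide the index, hence the order, of $C_{p,N}$ (the case $M=N$, $N$ not prime), or the order formula of Theorem \ref{thm:ordercuspidal} makes the order of $C_{p,N}$ visibly even (the case $N=2M$, $\omega(M)\geq 2$); in either case $\langle C_{p,N}\rangle[\m]\neq 0$. The remaining prime and near-prime cases ($N=M$ prime, and $N=2M$ with $M$ prime) are handled by Mazur's result that $M\equiv 1 \pmod{8}$, which is precisely where $h=2$ in Theorem \ref{thm:ordercuspidal} enters.
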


\begin{proof}
If $\ell$ is odd, then the result follows from Theorems \ref{thm:index1} and  \ref{thm:index2}. Therefore we assume that $\ell=2$. By the definition of the notation, $M$ is either $N$ or $N/2$.

\begin{itemize}
\item 
If $N$ is a prime and $N=M$, then $M\equiv 1 \pmod 8$ by Mazur \cite{M77}. Thus, we have $\cC_N[\m] \neq 0$.

\item 
If $N$ is not a prime and $N=M$, then we set $N=pD$ with $D$ odd and $\omega(D)\geq 1$.
(In other words, if $N$ is even then we set $p=2$.)
Since $(2,\, I_{N, N})=(2, \, I_{p, N})$ is maximal,  
the index of $I_{p, N}$, which is equal to the order of $C_{p, N}$, is divisible by $2$ 
and hence $\br {C_{p, N}}[\m] \neq 0$, which implies that $\cC_N[\m] \neq 0$.

\item
If $N=2M$ with $\omega(M)=1$, then $\m$ is not $2$-new and hence there is the corresponding Eisenstein maximal ideal of $\T(M)$. Therefore $M\equiv 1 \pmod 8$ by Mazur. This implies that the order of $C_{M, N}$ is $\frac{M-1}{4}$ by Theorem \ref{thm:ordercuspidal}. Thus, we get $\cC_N[\m]\neq 0$.

\item
If $N=2M$ with $\omega(M)\geq 2$, then the order of $C_{p, N}$ is divisible by $2$, where $p$ is any prime divisor of $M$. Therefore we get $\cC_N[\m] \neq 0$.
\end{itemize}
\end{proof}

\bibliographystyle{annotation}

\end{document}